\long\def\symbolfootnote[#1]#2{\begingroup%
\def\thefootnote{\fnsymbol{footnote}}\footnote[#1]{#2}\endgroup}
\newtheorem{theorem}{Theorem}
\newtheorem{proposition}{Proposition}[section]
\newtheorem{corollary}[proposition]{Corollary}
\newtheorem{theoremothers}{Theorem}
\theoremstyle{definition}
\newtheorem{remark}[proposition]{Remark}
\newtheorem{definition}[proposition]{Definition}
\newtheorem{example}[proposition]{Example}
\renewcommand{\qed}{\quad\hskip0pt\null\hfill$\square$\par\vspace*{3mm}}
\renewcommand{\theenumi}{\roman{enumi}}
\renewcommand{\labelenumi}{(\theenumi)}
\def\Q{\mathbb{Q}}
\def\Z{\mathbb{Z}}
\def\R{\mathbb{R}}
\def\N{\mathbb{N}}
\def\C{\mathbb{C}}
\def\I{\mathrm{Id}}
\def\PP{\mathbb{P}}
\def\e{e_1}
\def\f{e_2}
\newcommand{\gl}{\mathrm{glue}}
\newcommand{\compl}[1]{\overline{#1}}
\newcommand{\sltwo}{\mbox{SL}_2}
\newcommand{\gltwo}{\mbox{GL}_2}
\newcommand{\RR}{\mathbb{R}}
\newcommand{\NN}{\mathbb{N}}
\newcommand{\ZZ}{\mathbb{Z}}
\newcommand{\CC}{\mathbb{C}}
\newcommand{\QQ}{\mathbb{Q}}
\newcommand{\bpm}{\begin{pmatrix}}
\newcommand{\epm}{\end{pmatrix}}
\newcommand{\Ksc}{K_{\mbox{\footnotesize sc}}}
\newcommand{\Kcr}{K_{\mbox{\footnotesize cr}}}
\newcommand{\Khol}{K_{\mbox{\footnotesize hol}}}
\newcommand{\Ktr}{K_{\mbox{\footnotesize tr}}}
\newcommand{\out}{\mbox{Out}}
\newcommand{\Sigmafin}{\Sigma_{\mbox{\scriptsize fin}}}
\newcommand{\Sigmainf}{\Sigma_{\inf}}
\newcommand{\semicompl}[1]{\widehat{#1}}
\newcommand{\emKsc}{K_{\mbox{\em \footnotesize sc}}}
\newcommand{\emKcr}{K_{\mbox{\em \footnotesize cr}}}
\newcommand{\emKhol}{K_{\mbox{\em \footnotesize hol}}}
\newcommand{\emKtr}{K_{\mbox{\em \footnotesize tr}}}
\newcommand{\CCC}{\mathcal{C}}
\begin{document}

\begin{center}
\large\bfseries On the geometry and arithmetic of infinite translation surfaces
\end{center}

\begin{center}\bf
Ferr\'an Valdez$^a$\symbolfootnote[1]{Partially supported by Sonderforschungsbereich/Transregio 45, PAPIIT-UNAM  IN103411 \& IB100212, IACOD-IA100511 and CONACYT CB-2009-01 127991.}
\& Gabriela Weitze-Schmith\"{u}sen$^b$\symbolfootnote[2]{Supported 
within the program RiSC (sponsored by the 
Karlsruhe Institute of Technology and the MWK Baden-W\"urttemberg) and the  
Juniorprofessorenprogramm (sponsored by the MWK  Baden-W\"urttemberg)}
\end{center}

\begin{center}\it
$^a$
 Instituto de Matem\'aticas, U.N.A.M.\\
Campus Morelia, Morelia, Michoac\'an, M\'exico \\
\emph{e-mail:}\texttt{ferran@matmor.unam.mx}
\end{center}

\begin{center}\it
$^b$
Institute of Algebra and Geometry, Department of Mathematics, \\
Karlsruhe Institute of Technology (KIT), D-76128 Karlsruhe, Germany \\
\emph{e-mail:}\texttt{weitze-schmithuesen@kit.edu}

\end{center}

\emph{In honorem professoris Xavier G\'omez-Mont sexagesimum annum complentis.}

\begin{abstract}
\noindent
Precompact translation surfaces, {\emph i.e.} closed surfaces which carry
 a translation atlas outside of finitely many finite angle cone points,  
have been intensively studied for about 25 years now.
About 5 years ago the attention was also 
drawn to general translation surfaces.
In this case
the underlying surface can have infinite genus,
 the number of finite angle cone points of the translation structure can be infinite,
and there can be singularities which are not finite angle cone points. 
There are only a few invariants one classically 
associates with precompact translation surfaces, among them certain number fields,
\emph{i.e.} fields which are finite extensions of $\QQ$. These fields
are closely related to each other;
they are often even equal.
We prove by constructing explicit examples that most of the classical results for the fields associated 
with precompact translation surfaces fail in the realm of general
translation surfaces
and investigate the relations among these fields. 
A very special class of translation surfaces 
are so called square-tiled surfaces or origamis.
We give a characterisation for  infinite origamis.
\end{abstract}




\section{Introduction}
\indent Let $S$ be a translation surface, in the sense of Thurston
\cite{Thu}, and denote by $\overline{S}$ the metric completion with
respect to its natural translation invariant flat metric. $S$ is called 
{\em precompact} if $\overline{S}$ is homeomorphic to a compact surface. 
We call translation surfaces {\em origamis}, if they are obtained from 
gluing copies of the Euclidean unit square 
along parallel edges by translations; see \Cref{def-origamis}. They are precompact
translation surfaces if and only if the number of copies is finite. An important invariant associated with a  
translation surface $S$
is the {\em Veech group} $\Gamma(S)$ formed by the differentials  of 
affine diffeomorphisms of $S$ that preserve orientation;
as further invariants one considers 
the {\em trace field} $\Ktr(S)$, the {\em holonomy field} $\Khol(S)$, 
the {\em field
of cross ratios of saddle connections} $\Kcr(S)$ and the {\em field of saddle connections}
$\Ksc(S)$; compare \Cref{vgroup} and \Cref{definition-fields}. 
For precompact surfaces we have the following characterisation:

\begin{theoremothers}\cite[Theorem 5.5]{GJ}
  \label{GJ}
  Let $S$ be a precompact translation surface, and let $\Gamma(S)$ be its
  Veech group. The following statements are equivalent.
  \begin{enumerate}
  \item 
    The groups $\Gamma(S)$ and $\rm SL(2,\Z)$ are commensurable.
  \item 
    Every cross ratio of saddle connections is rational. Equivalently 
    the field $\Kcr(S)$ is equal to $\Q$.
  \item 
    There exists a translation covering from a puncturing of 
    $S$ to a once-punctured flat torus.
  \item 
    $S$ is an origami up to an affine homeomorphism, \emph{i.e.} there is a 
    Euclidean parallelogram that tiles $S$ by translations.
  \end{enumerate}
\end{theoremothers}

The first result of this article explores what remains of the preceding characterisation 
if  $S$ is a general 
\emph{tame} translation surface.
 Tame translation surfaces 
are the translation surfaces all of whose singularities
are  cone angle singularities (possibly of infinite angle).  
This includes surfaces like $\R^{2}$, but also surfaces whose fundamental group 
is not finitely generated.
We define  tameness and the different types of singularities in \Cref{prelim}.
Furthermore, we call $S$  {\em maximal}, if it has no finite singularities
of total angle $2\pi$; compare \Cref{def-origamis}.

\begin{theorem}\label{mainthm}
  Let $S$ be a maximal tame translation surface. Then,
  \begin{enumerate}
  \item\label{Thm1i} 
    $S$ is affine equivalent to an origami if and only if the set 
    of \emph{developed cone points} 
    is contained in $L+x$, where $L\subset\R^2$ is a 
    lattice and $x\in\R^2$ is fixed.
  \item\label{partB} 
    If $S$ is an origami the following 
    statements (\labelcref{Kcr})-(\labelcref{Ksc}) hold. 
    In (\labelcref{vg}) and (\labelcref{fKtr}) we require in 
    addition that there are at least 
    two nonparallel saddle connections on $S$:
    \begin{enumerate}\renewcommand{\theenumi}{}
    \item\label{vg} 
      The Veech group of $S$ is commensurable to a subgroup of $\rm SL(2,\Z)$.
    \item\label{Kcr} 
      The field of cross ratios $\emKcr(S)$ is isomorphic to $\Q$.
    \item\label{Khol}
      The holonomy field $\emKhol(S)$ is isomorphic to $\Q$.
    \item\label{Ksc}
      The saddle connection field $\emKsc(S)$ is isomorphic to $\Q$.
    \item\label{Ktr}\label{fKtr}
      The trace field $\emKtr(S)$ is isomorphic to $\Q$.
    \end{enumerate}
    However, none of (\labelcref{vg}) - (\labelcref{fKtr}) implies 
    that $S$ is an origami.
  \end{enumerate}
\end{theorem}

In the proof of \Cref{mainthm} we will show that even if we require 
that in (\labelcref{vg}) the Veech group of $S$ is equal to $\rm SL(2,\Z)$, 
this condition does not imply that $S$ is an origami. \\

If $S$ is precompact, then the four fields $\Ktr(S)$, $\Khol(S)$,
$\Kcr(S)$ and $\Ksc(S)$ are number fields and we have 
the following hierarchy:
\begin{equation}
  \label{E:hier}
  \Q\subseteq\Ktr(S) \subseteq \Khol(S) \subseteq \Kcr(S) = \Ksc(S)
\end{equation}
Thus by Theorem~\Cref{GJ} the conditions (\labelcref{vg}), 
(\labelcref{Kcr}) and (\labelcref{Ksc})
in  (\labelcref{partB}) of \Cref{mainthm} are, for precompact surfaces,
equivalent to being an origami. Conditions (\labelcref{Khol}) and (\labelcref{Ktr}), 
however, are even for precompact translation surfaces
not equivalent to being an origami. Indeed, recall that
``the general'' precompact translation surface has trivial Veech group, 
\emph{i.e.} Veech group $\{I,-I\}$,
where $I$ is the identity matrix (see \cite[Thm.2.1]{M}). This implies that 
(\labelcref{Ktr}) is not equivalent to being an origami. 
Furthermore, in \Cref{E:5} 
we construct an explicit example of a precompact surface $S$ that is not an origami 
and such that $\Khol(S)=\Q$. This shows that (\labelcref{Khol})  
is not equivalent to being an origami.\\

In the case of general tame translation surfaces, 
the fields  $\Ktr(S)$, $\Khol(S)$,
$\Kcr(S)$ and $\Ksc(S)$ are not necessarily number fields anymore; compare \Cref{alg}.
Furthermore from the hierarchy in \labelcref{E:hier} it just remains true in general
that $\Khol(S)$ and $\Kcr(S)$ are both subfields of $\Ksc(S)$.
Some of the other relations in \labelcref{E:hier} hold under
extra assumptions on $S$; compare
\Cref{corrolary-relations-between-fields}.
It follows that, in general, if $\Ksc(S)$ is isomorphic to $\Q$, then both 
$\Khol(S)$ and $\Kcr(S)$ 
are isomorphic to $\Q$. In terms of \Cref{mainthm}, part (\labelcref{partB}), 
this is equivalent to say that (\labelcref{Ksc}) implies both 
(\labelcref{Kcr}) and (\labelcref{Khol}). 
Note that furthermore trivially  (\labelcref{vg}) implies (\labelcref{Ktr}). 
We treat the remaining of these implications in the next theorem.\\  

\begin{theorem}\label{rel}
  There are examples of tame translation surfaces $S$ for which
  \begin{enumerate}
  \item\label{Thm2i}
    The Veech group $\Gamma(S)$ is equal to ${\rm SL(2,\Z)}$ 
    and $K$ is not equal to $\Q$, where
    $K$ can be chosen from $\emKcr(S)$,  $\emKhol(S)$ and $\emKsc(S)$.
  \item\label{Thm2ii}  
    The fields $\emKsc(S)$ (hence also $\emKcr(S)$ and 
    $\emKhol(S)$) and $\emKtr(S)$ are equal to $\Q$, but $\Gamma(S)$ 
    is not commensurable to a subgroup of $\rm SL(2,\Z)$.
  \item\label{Thm2Part3} 
    $\emKcr(S)$ or $\emKhol(S)$ is equal to $\Q$, but $\emKsc(S)$ is not.
  \item\label{Thm2iv} 
    The field $\emKcr(S)$ is equal to $\Q$, but $\emKhol(S)$ is not or {\emph{vice versa}}: 
    $\emKhol(S)$ is equal to $\Q$, but $\emKcr(S)$ is not.
  \item\label{Thm2v}
    The field $\emKtr(S)$ is equal to $\Q$, but none of the conditions 
    (\labelcref{vg}), (\labelcref{Kcr}), (\labelcref{Khol}) or (\labelcref{Ksc}) 
    in \Cref{mainthm} hold. Moreover, none of the conditions (b), (c) or (d) imply that $\emKtr(S)$ is isomorphic to $\Q$. 
  \end{enumerate}
\end{theorem}

The proofs of the preceding two theorems heavily rely 
on modifications of the construction 
in \cite[Construction 4.9]{PSV} which was there used to determine all possible Veech groups
of tame translation surfaces. We summarise this construction in \Cref{subs:PSV}.
One can furthermore modify the construction to prove that any subgroup of $\rm SL(2,\Z)$
is the Veech group of an origami. From this we will deduce the 
following statement about the oriented outer automorphism 
group $\rm Out^+(F_2)$ of the free group $\rm F_2$ in two generators:
\begin{corollary}
\label{OUTF2}
Every subgroup of $\rm Out^+(F_2)$ is the stabiliser of a conjugacy class of some (possibly
infinite index) subgroup of $\rm F_2$.
\end{corollary}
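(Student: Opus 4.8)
The plan is to reduce the statement to the construction announced immediately before it — that every subgroup of $\mathrm{SL}(2,\mathbf{Z})$ occurs as the Veech group of a (possibly infinite type) origami — by means of the standard dictionary between origamis and subgroups of $F_2$. First I would recall the classical identification $\mathrm{Out}^+(F_2)\cong\mathrm{SL}(2,\mathbf{Z})$: viewing $F_2$ as the fundamental group of the once-punctured torus $T^{*}=T\setminus\{0\}$, the action on the abelianization $H_1(F_2)\cong\mathbf{Z}^2$ induces an isomorphism $\mathrm{Out}(F_2)\to\mathrm{GL}(2,\mathbf{Z})$ (Nielsen), under which the orientation-preserving outer automorphisms are exactly the matrices of determinant $+1$. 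This lets me treat an arbitrary subgroup $H\le\mathrm{Out}^+(F_2)$ as a subgroup of $\mathrm{SL}(2,\mathbf{Z})$.

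Next I would set up the covering dictionary. By covering space theory, subgroups $U\le F_2=\pi_1(T^{*})$ correspond to unramified coverings of $T^{*}$, and two subgroups yield isomorphic coverings precisely when they are conjugate; completing over the puncture, conjugacy classes of subgroups of $F_2$ are thus in bijection with isomorphism classes of origamis (ramified over $0$). The key compatibility is that the affine action of $A\in\mathrm{SL}(2,\mathbf{Z})$ on $T$ restricts to a self-homeomorphism of $T^{*}$ whose induced outer automorphism is exactly the $\phi_A\in\mathrm{Out}^+(F_2)$ attached to $A$ above; hence the $\mathrm{SL}(2,\mathbf{Z})$-action permuting origamis corresponds, under the dictionary, to the $\mathrm{Out}^+(F_2)$-action on conjugacy classes $[U]\mapsto[\phi_A(U)]$.

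From this, the identification of the Veech group with a stabilizer follows: if $S$ is the origami associated to $[U]$, then $A\in\Gamma(S)$ if and only if $A\cdot S\cong S$ as origamis, that is, if and only if $\phi_A$ fixes the conjugacy class $[U]$. Therefore $\Gamma(S)=\mathrm{Stab}_{\mathrm{Out}^+(F_2)}([U])$. Given an arbitrary $H\le\mathrm{Out}^+(F_2)$, I would regard it as a subgroup of $\mathrm{SL}(2,\mathbf{Z})$, apply the announced construction to produce an origami $S$ with $\Gamma(S)=H$, let $U\le F_2$ be the (possibly infinite index) subgroup corresponding to $S$, and conclude $H=\Gamma(S)=\mathrm{Stab}_{\mathrm{Out}^+(F_2)}([U])$. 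Since $H$ was arbitrary, every subgroup of $\mathrm{Out}^+(F_2)$ is such a stabilizer.

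The hard part will be making the identification $\Gamma(S)=\mathrm{Stab}([U])$ rigorous without the finiteness hypotheses. For finite origamis this correspondence is classical, but here $U$ may have infinite index and $S$ is non-compact, so I must verify that the bijection between conjugacy classes and origamis persists, and — most delicately — that an affine automorphism of $S$ covering a prescribed $A$ exists exactly when $\phi_A$ preserves $[U]$, with no obstruction arising from the infinite type of $S$. Once the two $\mathrm{Out}^+(F_2)$-actions are checked to match in this generality, the corollary is immediate.
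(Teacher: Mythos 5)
Your proposal follows essentially the same route as the paper: identify $\mathrm{Out}^+(F_2)$ with $\mathrm{SL}_2(\mathbf{Z})$, realize an arbitrary subgroup as the Veech group of an origami via the construction of \cite{PSV} (choosing the slits at integer points), and invoke the identification of the Veech group with the stabilizer of $[U]$ (the paper cites \cite{S}, Prop.~2.1, and, like you, notes that the argument extends to infinite origamis). The only detail you gloss over that the paper makes explicit is the distinction between $\Gamma(S)$ and $\Gamma(S^*)$: one must also arrange the slits so that the holonomy of saddle connections spans $\mathbf{Z}^2$, ensuring $\Gamma(S)\subseteq\mathrm{SL}_2(\mathbf{Z})$.
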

\indent 
If $S$ is a precompact translation surface, the existence of hyperbolic 
elements, \emph{i.e.} matrices whose trace is bigger than 2, 
in $\Gamma(S)$ has consequences for the image 
of $H_1(S,\Z)$ in $\R^2$ under the developing map (also called \emph{holonomy} map; 
see \Cref{prelim}) 
and for the nature of some of the fields associated with $S$. To be more precise, if $S$ is precompact, the following is known: 

\begin{enumerate}\renewcommand{\theenumi}{\Alph{enumi}}\renewcommand{\labelenumi}{(\theenumi)}
 \item\label{Thm3Part1}
   If there exists $M\in\Gamma(S)$ hyperbolic, 
   then the holonomy field of $S$ is equal to $\Q[tr(M)]$. 
   In particular, the traces of any two hyperbolic 
   elements in $\Gamma(S)$ generate the same field over $\Q$; 
   see \cite[Theorem~28]{KS}.
 \item\label{Thm3Part2}  
   If there exists $M\in\Gamma(S)$ hyperbolic and 
   $tr(M)\in\Q$, then $S$ is an origami; see \cite[Theorem~9.8]{Mc1}.
 \item\label{Thm3Part3}
   If $S$ is a ``bouillabaisse surface" 
   (\emph{i.e.} if $\Gamma(S)$ contains 
   two transverse parabolic elements), then $\Ktr(S)$ is 
   totally real; compare \cite[ Theorem 1.1]{HL}. 
   This implies that if there exists an hyperbolic $M$ 
   in $\Gamma(S)$ such that $\Q[tr(M)]$ is not totally 
   real then $\Gamma(S)$ does not contain any parabolic elements; 
   see Theorem~1.2 in \emph{ibid.}
 \item\label{Thm3Part4}
   Let $\Lambda$ and $\Lambda_0$ be the subgroups of $\R^2$ generated 
   by the image under the holonomy map of  $H_1(\compl{S},\ZZ)$ and  
   $H_1(\compl{S},\Sigma ; \ZZ)$, respectively. Here $\Sigma$ is the set 
   of cone angle singularities of $S$. If the affine group of $S$ contains a pseudo-Anosov element, 
   then $\Lambda$ has finite index in $\Lambda_0$; see \cite[Theorem~30]{KS}.
\end{enumerate}
\indent The third main result of this paper shows that 
when passing to general tame translation surfaces 
there are no such consequences. For such surfaces, an element of 
$\Gamma(S)<{\rm GL_+(2,\R)}$ will be called hyperbolic, 
parabolic or elliptic if its image in $\rm PSL(2,\R)$
is  hyperbolic, parabolic or elliptic respectively.
\begin{theorem}
  \label{ncon}
There are examples of tame translation surfaces $S$ for which 
(\labelcref{Thm3Part1}), (\labelcref{Thm3Part2}), (\labelcref{Thm3Part3}) 
or (\labelcref{Thm3Part4}) from above do not hold.
\end{theorem}
\indent 
We remark that all tame translation surfaces $S$ that we construct 
in the proof of the preceding theorem have the same topological type: 
one end and infinite genus. Such surfaces are called Loch Ness monster; see 
\Cref{prelim}.\\

\indent This paper is organised as follows. In \Cref{prelim} 
we review the basics about general translation surfaces, tame translation surfaces
and origamis, 
their singularities and possible Veech groups. 
In \Cref{section3} we present the definitions of the fields listed in \Cref{mainthm} 
for general tame translation surfaces. We prove that the main 
algebraic properties of these fields which are true for precompact translation
surfaces no longer hold for general translation surfaces. 
For example, we construct examples of tame translation surfaces 
for which the trace field is not a number field. We furthermore show those
inclusions from \labelcref{E:hier} which still are valid for tame 
translation surfaces.
Section 4 deals with the proofs of the three theorems stated in this section.
 We refer the reader to \cite{HS}, \cite{HLT} or \cite{HW} for recent developments concerning 
tame translation surfaces.\\\\
\\
\textbf{Acknowledgements}. 
Both authors would like to express their gratitude
to the Hausdorff Research Institute for Mathematics for their hospitality and 
wonderful working environment within the Trimester program 
{\em Geometry and dynamics of Teichm\"uller space}.  The first author would like to thank the organisers of the conference \emph{Algebraic Methods in Geometry: Commutative and Homological Algebra in Foliations and Singularities}, in honour of Xavier G\'omez-Mont on the occasion of his 60th birthday, where part of the results of this article were presented.
The second author would 
furthermore like to thank Frank Herrlich, who has proofread a previous version.
Both authors are thankful to the referees who have substantially helped to improve the 
readability.

\section{Preliminaries}
  \label{prelim}

\subsection{General translation surfaces and their singularities}
\indent In this section we review some basic notions needed for the rest of the article. For a detailed exposition, we refer to \cite{GJ} and \cite{Thu}.\\
\\
\indent A \emph{translation surface} $S$ will be a 2-dimensional real $G$-manifold with $G=\R^2=Trans(\R^2)$; that is, a surface on which coordinate changes are translations of the real plane $\R^2$. We can pull back to $S$ the standard translation invariant flat metric of the plane  and obtain this way a flat metric on the surface. 
We denote by $\overline{S}$ the metric completion of $S$ with respect 
to this natural flat metric. A \emph{translation map} is  a $G$-map between 
translation surfaces. Every translation map 
$f:S_1\longrightarrow S_2$ has a unique continuous 
extension $\overline{f}:\overline{S_1}\longrightarrow\overline{S_2}$.
\begin{definition}\label{def-general-translation-surface}
If $\compl{S}$ is homeomorphic to an orientable compact surface, we say that $S$ is a {\em precompact translation surface}. Else we say that $S$ is not precompact. Observe that a not precompact translation surface is not necessarily of infinite type. The union of all precompact and not precompact translation surfaces form the set of \emph{general translation surfaces}. 
\end{definition}
\begin{definition}
Let $S$ be a translation surface. We call the points of $\overline{S}\backslash S$ \emph{singularities}
of the translation surface $S$. A point $x\in\compl S \setminus S$ is called a \emph{finite angle singularity} or \emph{finite angle cone point} 
of total angle $2\pi m$, where $m\geq 1$ is a natural number, if there exists a neighbourhood of $x$ which is isometric to a neighbourhood of the origin in $\R^{2}$ with a metric that, in polar coordinates $(r,\theta)$, has the form $ds^{2}=dr^{2}+(mrd\theta)$. The set of finite angle singularities of $\compl{S}$ is denoted by $\Sigmafin$.
\end{definition}

Precompact translation surfaces are obtained 
by glueing finitely many polygons (deprived of their vertices)
along parallel edges by translations. One even obtains
all precompact translation surfaces in this way; see \cite{Masur}.
Thus if $S$ is a precompact translation surface, all of its 
singularities are finite angle singularities. If furthermore 
$\compl{S}$ has genus at least $2$, then, by a simple Euler characteristic
calculation, $\compl{S}$  always has singularities. For non precompact translation surfaces, new kinds of singularities will occur. 
We illustrate this in the following example.

\begin{example}\label{staircase}
In \Cref{figure-staircase} we depict a translation surface
obtained from infinitely many copies of the Euclidean unit square.
More precisely, we remove the vertices from all the squares 
in the figure. Some pairs of edges are already identified; among the remaining edges
we  identify opposite ones which are labelled by the same letter 
by translations. The result is a translation surface $S$ which is not precompact. 
It is called \emph{infinite staircase} because of its shape. This and similar shaped
surfaces have been intensively studied in the literature; see e.g. \cite{HS},
\cite{HW}, \cite{HW1} and \cite{CG}.
$S$ is a prototype for what we 
will call in this text an infinite origami or infinite square-tiled surface; compare
\Cref{def-origamis}. The translation surface $S$ comes with a natural cover $p$ 
to the once punctured torus obtained
from glueing parallel edges of the Euclidean unit square again with its vertices
removed.\\ 
Observe furthermore that the metric completion of the infinite staircase 
$S$ has four singularities $x_{1}$, $x_{2}$, $x_{3}$ and $x_{4}$. 
Restricted to  a punctured neighbourhood of them $p$ is infinite cyclic and 
the universal cover of a once punctured disk.  In this sense
the singularities $x_1$, \ldots, $x_4$ generalise 
finite angle singularities of angle $2\pi m$. They are prototypes for
what we call infinite angle singularities; compare \Cref{def-infinite-angle-singularity}. 
\end{example}
\begin{figure}
\begin{center}
\includegraphics[scale=0.8]{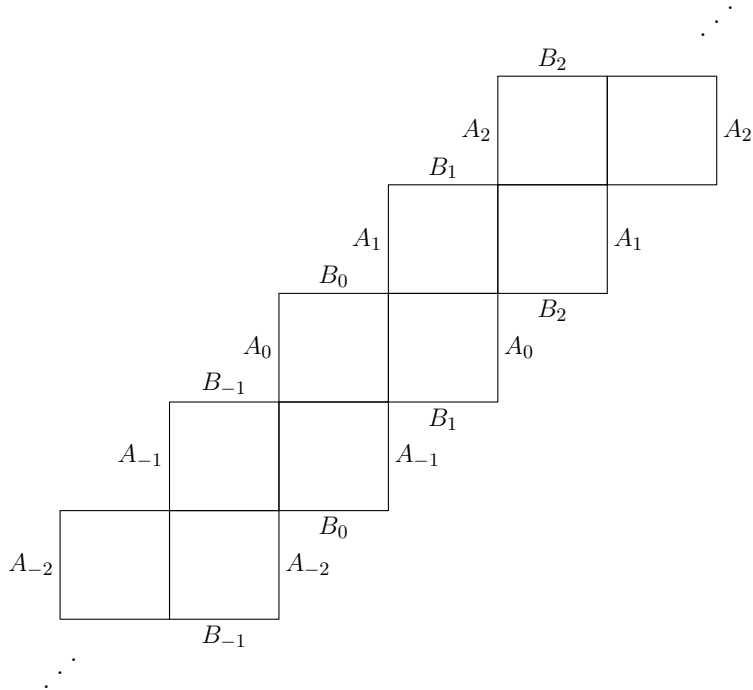}\\
\caption{An infinite-type translation surface}\label{figure-staircase}
\end{center}
\end{figure}
 
\begin{definition}\label{def-infinite-angle-singularity}
Let $S$ be a translation surface. A point $x\in\compl{S}$ is called an 
\emph{infinite angle singularity} or \emph{infinite angle cone point} 
if there exists a neighbourhood of $x$ isometric to the neighbourhood of the branching point of the infinite cyclic flat branched covering of $\R^{2}$.  The set of infinite angle singularities of $\compl{S}$ is denoted by $\Sigmainf$. Points in the set $\Sigma=\Sigmafin\cup\Sigmainf$ will be called \emph{cone angle singularities} of $S$ or just \emph{cone points}.
\end{definition}

\begin{definition}
A translation surface $S$ is called \emph{tame} if all points in $\compl{S}\setminus S$ 
are cone angle singularities (of finite or infinite total angle). A 
tame translation surface $S$ is said to be of \emph{infinite-type}
if the fundamental group of $S$ is not finitely generated. 
\end{definition}
Every precompact translation surface is tame. There are tame translation surfaces with infinite angle singularities which are not of infinite type. For example 
consider the infinite cyclic covering of the once punctured plane.
 Explicit examples of infinite-type translation surfaces arise naturally when studying the billiard game on a polygon whose interior angles are not rational multiples of $\pi$ (see \cite{V1}). 
Nevertheless, not all translation surfaces are tame. 
If one allows 
infinitely many polygons, {\em wild} types of singularities
may occur. Simple examples of not tame translation surfaces can be found in \cite{C} and \cite{BV}.
\\

In the following we define the very special class of translation surfaces
called origamis or square-tiled surfaces. 
With \Cref{staircase} we have already seen a specific example of them.

\begin{definition}\label{def-origamis}
  A translation surface is called {\em origami} or 
  {\em square-tiled surface}, if it fulfils one of the two following 
  equivalent conditions:
  \begin{enumerate}
  \item 
    $S$ is a translation surface obtained from glueing (possibly infinitely many) copies
    of the Euclidean square along edges by translations
    according to the following rules:
    \begin{itemize}
    \item
      each left edge is glued to precisely one  right edge,
    \item
      each upper edge to precisely one lower edge and 
    \item
      the resulting surface is connected;
    \end{itemize}
      and removing all singularities.
  \item 
    $S$ allows an unramified covering $p:S^* \to T_0$ of the once-punctured unit torus $T_0 = 
    (\RR^2\backslash L_0)/L_0$, such that $p$ is a translation map. Here 
    $S^*$ is a subset of $S$ such that the complement $S\backslash S^*$
    is a discrete set of points on $S$.
    $L_0$ is the lattice in $\R^2$ spanned by the two standard basis vectors 
    $e_1 = (1,0)$ and $e_2 = (0,1)$. Furthermore, $S$ is maximal in the sense that
    $\overline{S}\backslash S$ contains no finite angle singularities of angle $2\pi$.
  \end{enumerate}
  An origami  will be called \emph{finite} if the number of 
  squares needed to construct it is finite or, equivalently, 
  if the unramified covering  $p:S \to T_0$ is finite. 
  Else, the origami will be called \emph{infinite}. 
  See \cite[Section 1]{Snonc} for a detailed introduction to finite origamis. Infinite 
  origamis were studied e.g. in \cite{HS} and \cite{G}.
\end{definition}


\subsection{Developed cone points and the Veech group}

\indent 
In the following we introduce the \emph{set of developed cone points}
for tame translation surfaces,
which will 
play an important role in the proof of \Cref{mainthm}.
Let $\pi_S:\widetilde{S}\longrightarrow S$ be a universal cover of a translation surface $S$ and ${\rm Aut}(\pi_S)$
the group of its deck transformations. From now on, $\widetilde{S}$ 
is endowed with the translation structure obtained as pull-back from the one on $S$ via $\pi_S$. 
Recall from \cite[Section 4.3]{Thu} that for every deck transformation $\gamma$, 
there is a unique translation hol$(\gamma)$ satisfying
\begin{equation}
  \label{hol}
  \rm
  dev\circ\gamma=hol(\gamma)\circ dev,
\end{equation}
where $\rm dev:\widetilde{S}\longrightarrow \mathbf{R}^2$ denotes the developing map.
The map hol$: {\rm Aut}(\pi_S) \to \mbox{Trans}(\R^2) \cong \RR^2$ is a group homomorphism.
By considering the continuous extension of each map in \Cref{hol} to the metric completion of its domain, we obtain
\begin{equation}
  \label{ehol}
  \rm
  \overline{dev}\circ\overline{\gamma}=hol(\gamma)\circ \overline{dev}.
\end{equation}
Overall we have the following commutative diagram:
\begin{equation}
  \xymatrix{
         && \overline{\widetilde{S}} \ar[rrrr]^{\overline{\gamma}} \ar@/_/[dddll]_{\overline{\rm dev}}&
            && & \overline{\widetilde{S}} \ar@/^/[dddrr]^{\overline{\rm dev}}&& \\
         && & \widetilde{S} \ar@{_(->}[ul]\ar[rr]^{\gamma} \ar[d]_{\pi_S} \ar[ddlll]_{\rm dev}
            && \widetilde{S} \ar@{^(->}[ur]   \ar[d]^{\pi_S} \ar[ddrrr]^{\rm dev}& &&\\
         && & S             &&     S         & &&\\
    \R^2 \ar[rrrrrrrr]^{\rm hol(\gamma)}&& & && & && \R^2
    \hspace*{5mm}.
    }
    \end{equation}
\begin{definition}
 \emph{The set of developed singularities} of $S$ is  the 
subset of the plane $\R^2$ given by 
$\overline{\rm dev}(\overline{\widetilde{S}}\setminus \widetilde{S})$. 
We denote it by $\widetilde{\Sigma}(S)$. If $S$ is a tame
translation surface, we also call $\widetilde{\Sigma}(S)$ the 
\emph{set of developed cone points}.
\end{definition}

\begin{definition}
  \label{singgeo}
  A \emph{singular geodesic} of a translation surface $S$ is an open geodesic segment 
  in the flat metric of $S$ whose image under the natural 
  embedding $S\hookrightarrow\compl{S}$ issues from a singularity 
  of $\compl{S}$, contains no singularity in its interior 
  and is not properly contained in some other geodesic segment. 
  A \emph{saddle connection} is a finite length singular geodesic.
\end{definition}
\indent To each saddle connection we can associate a \emph{holonomy vector}: we 'develop' the saddle connection in the plane by using local coordinates of the flat structure. The difference vector defined by the planar line segment is the holonomy vector. Two saddle connections are \emph{parallel}, if their corresponding holonomy vectors are linearly dependent. \\
\\
\indent Next, we introduce the {\em Veech group}, which 
since Veech's article \cite{Ve} from 1989 has been
studied for precompact translation surfaces 
as the natural object associated with the surface. 
Let $\mathrm{Aff}_+(S)$ be the group of affine orientation 
preserving homeomorphisms of a translation surface $S$. 
Consider  the map \begin{equation}
    \label{eseq}
     \mathrm{Aff}_+(S)\overset{D}\longrightarrow \mathbf{GL}_+(2,\R)
\end{equation}
 that associates to every $\phi\in \mathrm{Aff}_+(S)$ its (constant) Jacobian derivative $D\phi$.
 \begin{definition}
    \label{vgroup}
    Let $S$ be a translation surface. We call $\Gamma(S)=D(\mathrm{Aff}_+(S))$ the \emph{Veech group} of $S$.
 \end{definition}

\begin{remark}\label{remark-action}
The group $\mathbf{GL}_{+}(2,\R)$ naturally acts on the set of translation surfaces: We define
$A\cdot S$ to be the translation surface obtained from $S$ by composing each chart in the translation
atlas with the linear map ${x\choose y} \mapsto A\cdot {x\choose y}$. Since the map $\mathrm{id}_A: S \to A\cdot S$
which topologically is the identity map has derivative $A$, we have
that $\Gamma(A\cdot S) = A\cdot\Gamma(S)\cdot A^{-1}$.
\end{remark}

\subsection{Constructing tame surfaces with prescribed Veech groups.} 
  \label{subs:PSV}
The proofs of our main results heavily rely on slight modifications of 
the construction in the proof of \cite[Proposition 4.1]{PSV}. 
In this section we review this construction. 
We will mainly use the notation of \cite{PSV}. \\
The construction we are about to review proves the following:
\begin{proposition}[{\cite[Proposition 4.1]{PSV}}]
  \label{P:PSV}
  For any countable subgroup $G$ of ${\rm GL_{+}(2,\R)}$ disjoint from 
  $\{\mathcal{U}=g\in \rm \mathbf{GL}_{+}(2,\R)\hspace{1mm}:\hspace{1mm} || g||<1\}$ 
  there exists a tame translation surface $S=S(G)$, which is homeomorphic to the 
  \emph{Loch Ness monster}, with Veech group $G$.
\end{proposition}
The {\em Loch Ness monster} is the unique surface $S$ (up to homeomorphism) of infinite genus and one end. By one end we mean that for every compact set $K\subset S$ there exists a compact set $K\subset K'\subset S$ such that $S\setminus K'$ is connected. We refer the reader to \cite{R} for a more detailed discussion on surfaces of infinite genus and ends. \\

First we have to recall a basic geometric operation which will play an important
role in the construction: \emph{glueing translation surfaces along marks}.

\begin{definition}
Let $S$ be a tame translation surface. A \emph{mark} on $S$ is an oriented finite length geodesic (with endpoints) on $S$. The \emph{vector} of a mark is its holonomy vector, which lies in $\R^2$. If $m, m'$ are two disjoint marks on $S$ with equal vectors, we can perform the following operation. We cut $S$ along $m$ and $m'$, which turns $S$ into a surface with boundary consisting of four straight segments.  Then we reglue these segments to obtain a tame translation surface $S'$ different from the one we started from. We say that $S'$ is obtained from $S$ by \emph{reglueing along $m$ and $m'$}. Let $S_0=S\setminus (m\cup m')$. Then $S'$ admits a natural embedding $i$ of $S_0$. If $A\subset S_0$, then we say that $i(A)$ is \emph{inherited} by $S'$ from $A$. 
\end{definition}

\begin{remark}
\label{Euler and 4pi}
If $S'$ is obtained from $S$ by reglueing, then
the number of singularities of $S'$ of a fixed angle equals the one of $S$, except for $4\pi$--angle singularities, whose number in $S'$ 
is greater by $2$ to that in $S$ (we put $\infty+2=\infty$). The Euler characteristic of $S$ is greater by $2$ than the Euler characteristic of $S'$.\\
We can extend the notion of reglueing to ordered families $\mathcal{M}=(m_n)_{n=1}^{\infty}$ and $\mathcal{M'}=(m'_n)_{n=1}^{\infty}$ of disjoint marks, which do not accumulate in $\overline{S}$, and such that the vector of $m_n$ equals the vector of $m'_n$, for each $n$.
\end{remark}
\emph{Outline of the construction}. Let $\{a_{i}\}_{i\in I}$ (with $I\subseteq \NN$)
be a (possibly infinite) set of generators for $G$. We make use of the fact that any group $G$ acts on its Cayley graph $\Gamma$ and turn the graph $\Gamma$ in a $G$-equivariant way into a translation surface. In the following we describe the general idea of the construction; 
below we give the explicit construction for the case that $G$ is generated by
two elements. The construction then works just in the same way
for general groups; compare \cite[Construction 4.9]{PSV}.
\begin{itemize}
\item
  With each vertex $g$ of $\Gamma$ we associate a translation surface $V_g$.
  More precisely we start from some translation surface $V_{\I}$ and
  define $V_g$ to be its translate $g\cdot V_{\I}$ by the action of $\mathbf{GL}_{+}(2,\R)$
  on the set of translation surfaces described in \Cref{remark-action}.
  Observe that the linear group $G$ naturally acts via affine homeomorphisms
  on the disjoint union of the $V_g$'s; an element $h \in G$ maps $V_g$
  to $V_{h\cdot g}$.
  In the next step we will choose disjoint marks on the translation surfaces $V_g$.
  Reglueing the disjoint union of the surfaces $V_g$ along these marks will give
  us a connected surface on which $G$ acts by affine homeomorphisms.\\
  At the moment, we can assume $V_{\I}$ just to be the real plane $\RR^2$
  equipped with an origin and a coordinate system.
\item
  We choose marks on the starting surface $V_{\I}$ in the following way:
  \begin{itemize}
  \item
    For each $i$ in $I$  we choose a family $\CCC^i = \{m^i_j\}_{j \in J}$ 
    (with $J \subseteq \NN$) 
    of horizontal marks $m^i_j$  
    of length 1,
    \emph{i.e.} the vector of each mark $m^i_j$ is the first standard basis vector $e_1$. 
  \item
    For each $i$ we choose a family $\CCC^{-i} = \{m^{-i}_j\}_{j \in J}$ of marks
    with vector $a_i^{-1}(e_1)$, \emph{i.e.} the vector of $m^{-i}_j$ 
    is equal to $a_i^{-1}\cdot e_1$.
  \item
    All marks are disjoint.
  \end{itemize}
\item
  On each $V_g$ we take the corresponding marks
  $g(m^i_j)$ and $g(m^{-i}_j)$ with $i \in I$ and $j \in \NN$.
  The mark $g(m^i_j)$ has the vector $g\cdot e_1$ and $g(m^{-i}_j)$
  has the vector $g a_i^{-1}\cdot e_1$.
\item
  We pair the mark $g(m^i_j)$ on the surface $V_g$
  with the mark $ga_i(m^{-i}_j)$ on $V_{g a_i}$.
  Observe that for both the vector is $g\cdot e_1$.\\
  We now reglue the disjoint union of the $V_g$'s along these
  pairs of marks.
\end{itemize} 
This gives us a translation surface $S_1$ on which  the elements of $G$
act via affine homeomorphisms, \emph{i.e.} $\Gamma(S_1)$ contains
$G$. However we are not yet done, but still have the following problems:
\begin{enumerate}
\item 
  The Veech group $\Gamma(S_1)$ can be bigger than $G$.
\item
  The singularities can accumulate. In this case $S_1$ is not tame. 
\item
  We want the translation surface to have  one end.
\end{enumerate}
We resolve the problems in the following way: To 
enforce that all elements in the Veech group are in $G$, we will modify 
the starting surface $V_{\I}$.
We will replace it by a surface obtained from glueing 
a \emph{decorated surface} $\widetilde{L}'_{\I}$ (described below) to a
plane $A_{\I} = \RR^2$. The surface $\widetilde{L}'_{\I}$ will be 
decorated with special singularities. This will guarantee that every 
orientation preserving affine homeomorphism 
permutes the set of the singularities on the $\widetilde{L}'_{g}$'s  
and with some more care we will  
establish that it actually acts as one of the elements of $G$.
To avoid accumulation of singularities, we will associate with each edge 
in the Cayley graph  between two vertices $g$ and $g'$ (let us say that $g^{-1}g' = a_i$
is the $i$'th generator) 
a \emph{buffer surface} $\hat{E}^i_g$ which connects $V_g$ to $V_{g'}$, 
but separates them by a definite distance. 
Finally, we keep track of the end by providing that each $V_g$ and 
$\hat{E}^i_g$ is one-ended and that after glueing all $V_g$ and $\hat{E}^i_g$, 
their ends actually merge into one end. This actually is the reason
why we have to choose infinite families of marks. If we do not require
the surface to be a Loch Ness monster, then it suffices to take one mark
from each infinite family.\\

\emph{An illustrative example}. In the following paragraphs we carry 
out the construction for the case where $G$ is 
generated by two matrices $a_{1}$ and $a_{2}$. The general case works in the same
way; compare \cite[Construction 4.9]{PSV}.\\
{\bf Constructing the translation surface \boldmath{$V_{g}$}}: 
We first construct the surface $V_{Id}$. We will obtain it by glueing two surfaces $A_{Id}$ 
and $\widetilde{L}'_{Id}$ along an infinite family of marks. 
Let $A_{\I}$ be an oriented flat plane, equipped with an origin and the standard
basis $e_1 = (1,0)$ and $e_2 = (0,1)$. We define the families of marks as follows:
\begin{itemize}
\item
  For $i=0,1,2$ let $\mathcal{C}^i$ be the family of marks on $A_{\mathrm{Id}}$ with 
  endpoints $i\f+(2n-1)\e,\ i\f+2n\e$, for $n\geq 1$. 
  All these marks are pairwise disjoint.
\item
  Given $x_1, y_1\in \R$, consider the family $\mathcal{C}^{-1}$ of 
  marks on $A_{\mathrm{Id}}$ with endpoints $(nx_1,y_1),\ (nx_1, y_1)+a_1^{-1}(\e)$, 
  for $n\geq 1$. We can choose $x_1>0$ sufficiently large and $y_1<0$ sufficiently small 
  so that all these marks are pairwise 
  disjoint and disjoint from the ones in $\mathcal{C}^i$ for $i=0,1,2$. 
\item 
  Observe that a translate of the lower half-plane in $A_{\mathrm{Id}}$ 
  is avoided by all already constructed marks.
  In this way we can choose $x_2,-y_2\in \R$ sufficiently large so that the 
  marks with endpoints $(nx_2,y_2),\ (nx_2, y_2)+a_2^{-1}(\e)$, 
  for $n\geq 1$, are pairwise disjoint and disjoint with the previously 
  constructed marks. We denote this family by $\mathcal{C}^{-2}$.
\end{itemize}
Let $L_{\I}$ be an oriented flat plane, equipped with an origin $O_{\I}$. Let $\widetilde{L}_{\I}$ be the threefold cyclic branched covering of $L_{\I}$, which is branched over the origin.
Denote the projection map from $\widetilde{L}_\I$ onto $L_\I$ by $\pi$. Denote by $R$ the closure in $\widetilde{L}_{\mathrm{Id}}$ of one connected component of the preimage under $\pi$ of the open right half-plane in $L_{\mathrm{Id}}$. On $R$ consider coordinates induced from $L_{\mathrm{Id}}$ via $\pi$. We define the following family of marks on $\widetilde{L}_{\I}$:
\begin{itemize}
\item
  Let $\mathcal{C}'$ be the family of marks in $R$ 
  with endpoints $(2n-1)\e, 2n\e$, for $n\geq 1$.
\item 
  Let $t$ and $b$ be the two marks in $\widetilde{L}_{\mathrm{Id}}$ 
  with endpoints in $R$ with coordinates $\f,2\f$ and $-2\f, -\f$, 
  respectively. 
\end{itemize}
Let $\widetilde{L}'_{\I}$ be the tame flat 
surface obtained from $\widetilde{L}_{\I}$ by reglueing along $t$ and $b$. 
We call $\widetilde{L}'_{\I}$ the \emph{decorated surface}.
Finally, we obtain $V_{\rm Id}$ by glueing $A_{\rm Id}$ with $\widetilde{L}'_{\I}$ 
along the families of marks $\mathcal{C}^{0}$ and $\mathcal{C}'$.
For each $g\in G$ we define $V_{g}$ as the translation surface $g\cdot V_{\I}$.\\
Observe that if
we denote by $\widetilde{O}_g$ the unique preimage on $\widetilde{L}_{g}$ 
of the origin $O_{g}$ of $L_{g}$ via the three-fold covering, then $\widetilde{O}_g$
is a singularity of total angle $6\pi$ and there are precisely three 
saddle connections starting in $\widetilde{O}_g$.\\
{\bf Constructing the buffer surface \boldmath{$\hat{E}^i_g$}:} 
Let $E_{\I}, E'_{\I}$ be two oriented flat planes, equipped with origins that allow us to identify them with $\R^2$. We define the following families of vector $\e$ marks 
on $E_{\I}\cup E'_{\I}$. 
\begin{itemize}
\item
  Let $\mathcal{S}$ be the family of marks on $E_{\I}$ 
  with endpoints $4n\e, (4n+1)\e$, for $n\geq 1$.
\item
  Let $\mathcal{S}_\gl$ be the family of marks on $E_{\I}$ 
  with endpoints $(4n+2)\e, (4n+3)\e$, for $n\geq 1$. 
\item
  Let $\mathcal{S}'$ be the family of marks on $E'_{\I}$ 
  with endpoints $2n\f, 2n\f+\e$, for $n\geq 1$.
\item
  Finally, let $\mathcal{S}'_\gl$ be the family of marks 
  on $E'_{\I}$ with endpoints $(2n+1)\f,(2n+1)\f+\e$, for $n\geq 1$. 
\end{itemize}
Let $\hat{E}_{\I}$ be the tame flat surface obtained from 
$E_{\I}$ and $E'_{\I}$ by reglueing along $\mathcal{S}_\gl$ and $\mathcal{S}'_\gl$. We call $\hat{E}_{\I}$ the \emph{buffer surface}. 
The surface $\hat{E}_{\I}$ comes with the distinguished families of marks 
inherited from $\mathcal{S}$ and $\mathcal{S}'$, for which we retain
the same notation. Let $\hat{E}^1_{\I}$ and $\hat{E}^2_{\I}$
be two copies of $\hat{E}_{\I}$ and for  each $g\in G$ let $\hat{E}^i_g$
to be the translation surface $g\cdot \hat{E}^i_g$ ($i \in \{1,2\}$). It is endowed with
the two family of marks $\mathcal{S}^i_g$ and $\mathcal{S}'^i_g$.\\
{\bf Construction of the surface \boldmath{$S$}:}
We finally obtain the desired surface $S$ from the disjoint
union of all $V_g$'s and $\hat{E}^i_g$ in the following way: 
\begin{itemize}
\item
  Reglue each mark $\mathcal{C}^1_g$ on $V_g$ with $\mathcal{S}^1_g$ 
  on $\hat{E}^1_g$, 
  and each mark $\mathcal{S}'^1_g$ on $\hat{E}^1_g$ with  $\mathcal{C}^{-1}_{ga_1}$
  on $V_{ga_1}$.
\item
   Reglue each mark $\mathcal{C}^2_g$ on $V_g$ with $\mathcal{S}^2_g$ 
  on $\hat{E}^2_g$, 
  and each mark $\mathcal{S}'^2_g$ on $\hat{E}^2_g$ with  $\mathcal{C}^{-2}_{ga_2}$
  on $V_{ga_2}$. 
\end{itemize}
In \cite[Section 4]{PSV} it is carefully carried out that
the construction is well defined and gives the desired result 
from \Cref{P:PSV}.



\section{Fields associated with translation surfaces}
\label{section3}
There are four subfields of $\R$ in the literature 
which are naturally associated with
a translation surface $S$. They are called the {\em holonomy field} $\Khol(S)$, the {\em segment field} or 
{\em field of saddle connections} $\Ksc(S)$, the
{\em field of cross ratios of saddle connections} $\Kcr(S)$, and the {\em trace field} $\Ktr(S)$; compare \cite{KS} and \cite{GJ}. In the following, we extend
their definitions to (possibly
non precompact) tame translation surfaces.
\begin{remark}
  It follows from  \cite[Lemma 3.2]{PSV} that there 
  are only three types of tame translation surfaces such that $\overline{S}$ has  
  no singularity: $\R^2$, $\R^2/\Z$ and flat tori. Furthermore, tame translation 
  surfaces with only one 
  singularity are cyclic coverings of $\R^2$ ramified over the origin. 
  Finally, if $\overline{S}$ has at least two singularities,
  then there exists at least one saddle connection. 
\end{remark}
\begin{definition}\label{definition-fields}
  Let $S$ be a tame translation surface and $\compl{S}$
  the metric completion of $S$.
  \begin{enumerate}
  \item\label{def:KS} (Following \cite[Section 7]{KS}.) 
    Let $\Lambda$ be the image of $H_1(\compl{S},\ZZ)$ in $\RR^2$ under
    the holonomy map $h$  and let
    $n$ be the dimension of the smallest $\RR$-subspace of
    $\RR^2$ containing $\Lambda$; in particular $n$ is $0$, $1$ or $2$. 
    The {\em holonomy field {\em $\Khol(S)$}}
    is the smallest subfield $k$ of $\RR$ such 
    that \[\Lambda\otimes_{\ZZ} k\cong k^n.\]
  \item\label{def:Ksc} 
    Let $\Sigma$
    denote the set of all singularities of $\overline{S}$. 
    Using in (\labelcref{def:KS})  $H_1(\compl{S}, \Sigmafin; \ZZ)$, 
    the homology relative to the set of finite angle 
    singularities,
    instead of the absolute homology $H_1(\compl{S},\ZZ)$, 
    we obtain the {\em segment field} or {\em field of
    saddle connections {\em$\Ksc(S)$}.}
  \item\label{def:Kcr} (Following \cite[Section 5]{GJ}.) 
    The {\em field of cross ratios of
    saddle connections} $\Kcr(S)$ is the field generated by 
    the set of all cross ratios $(v_1, v_2;v_3,v_4)$, 
    where the $v_i$'s are four pairwise nonparallel holonomy 
    vectors of saddle connections of $S$; compare \Cref{remark3.3} iii). 
  \item\label{def:Ktr} 
    Finally, the {\em trace field} $\Ktr(S)$ is the
    field generated by the traces of elements in the Veech group:
    $\Ktr(S) = \QQ[\mbox{tr}(A)| A \in \Gamma(S)]$.
  \end{enumerate}
\end{definition}

In the rest of this section we mean by a {\em holonomy vector} always the holonomy vector 
of a saddle connection.\\

\begin{remark}\label{remark3.3}
  \begin{enumerate}
  \item\label{rem3.3i} 
    \Cref{definition-fields} (\labelcref{def:KS}) is equivalent to the following: If $n=2$, 
    take any two nonparallel vectors $\{e_1,e_2\}\subset\Lambda$, 
    then $\Khol(S)$ is the smallest subfield $k$ of $\RR$  
    such that every element $v$ of $\Lambda$
    can be written in the form $a\cdot e_1 + b\cdot e_2$, with $a,b \in k$. 
    If $n=1$, any element $v$ of $\Lambda$ can be written as $a\cdot e_1$, 
    with $a\in\Khol(S)$ and $e_1$ any nonzero (fixed) vector in $\Lambda$. 
    If $n=0$,  $\Khol(S) = \QQ$.\\
    The same is true for $\Ksc(S)$, if $\Lambda$ is the image of 
    $H_1(\compl{S}, \Sigmafin; \ZZ)$ in $\RR^2$. 
  \item
    Recall that $\compl{S}$ is a topological surface
    if and only if all of its singularities have finite cone angles.
    However, if $\Sigmainf$ (resp. $\Sigmafin$) is the set of infinite 
    (resp. finite) angle singularities, then
    $\semicompl{S} = \compl{S}\backslash\Sigmainf = S \cup \Sigmafin$
    is a surface, possibly of infinite genus. 
    We furthermore have that the fundamental group
    $\pi_1(\compl{S})$ equals $\pi_1(\semicompl{S})$ and thus 
    $H_1(\compl{S},\ZZ) \cong $ $H_1(\semicompl{S},\ZZ)$: 
    Indeed, for every infinite 
    angle singularity $p_0\in\overline{S}$, there exists by definition
    a neighbourhood $U$ of $p_0$ in $\overline{S}$ which is isometric
    to a neighbourhood of the branching point $z_0$ of the infinite flat cyclic covering
    $X_0$ of $\RR^2$ branched over $0$. Without loss of 
    generality we may choose the neighbourhood of $z_0$ as
    an open ball of radius $\varepsilon$ 
    in $X_0$. We then have that $U$ is homeomorphic to 
    $\{(x,y) \in \RR^2| x > 0\} \cup \{(0,0)\} \subset \R^2$.
    In particular, $U$ and $U\backslash\{p_0\}$ are both contractible, 
    and by the Seifert-van Kampen
    theorem we have $\pi_1(\compl{S}\backslash\{p_0\}) \cong \pi_1(\compl{S})$.
  \item
    Recall that the cross ratio $r$ of four vectors $v_1$, \ldots, $v_4$ with
    $v_i = (x_i,y_i)$ is equal to 
    the cross ratio of the real numbers $r_1 = y_1/x_1$, \ldots, $r_4 = y_4/x_4$,
    \emph{i.e.} 
    \begin{equation}
      \label{CR}
      (v_1,v_2;v_3,v_4) = \frac{(r_1 - r_3)\cdot(r_2-r_4)}{(r_2-r_3)\cdot(r_1 - r_4)}.
    \end{equation}
    If $r_i=\infty$ for some $i=1,\ldots,4$, one eliminates 
    the factors on which it appears in \Cref{CR}. 
    For example, if $r_1=\infty$, then $(v_1,v_2;v_3,v_4)=\frac{r_2-r_4}{r_2-r_3}$. 
    If there are no four non parallel holonomy vectors,  $\Kcr(S)$ is equal to $\Q$.
  \item\label{rem3.3iv}
    The four fields from \Cref{definition-fields} are invariant under the
    action of $\mathbf{GL}(2,\R)$ described in \Cref{remark-action}, {\em i.e.} we have
    for $A \in \mathbf{GL}(2,\R)$
    \[
    \begin{array}{lcllcl}
      \Khol(S)&=& \Khol(A\cdot S), &\Ksc(S) &=& \Ksc(A\cdot S),\\ 
      \Kcr(S)&=& \Kcr(A\cdot S),   &\Ktr(S) &=& \Ktr(A\cdot S).
    \end{array}
    \]
    For $\Khol(S)$ and $\Ksc(S)$ this follows from (\labelcref{rem3.3i}). 
    Recall that the cross ratio is invariant under linear transformation. Thus
    the claim is true for the field $\Kcr(S)$. Finally, we have that 
    $\Gamma(A\cdot S)$ is conjugated to $\Gamma(S)$; compare \Cref{remark-action}.
    Since the trace of a matrix is invariant under conjugation, the claim
    also holds for $\Ktr(S)$.
  \end{enumerate}
\end{remark}

It follows directly from the definitions that $\Khol(S) \subseteq \Ksc(S)$.
Furthermore, we see from \Cref{remark3.3}
that $\Kcr(S) \subseteq \Ksc(S)$: Suppose $S$ has two linearly independent
holonomy vectors $w_1$ and $w_2$. By (\labelcref{rem3.3iv}) in the preceding remark we may assume that $w_1 = e_1$,
$w_2 = e_2$ is the standard basis. Let $v_1$, $v_2$, $v_3$, $v_4$ be four 
arbitrary pairwise 
nonparallel holonomy vectors with $v_i = (x_i,y_i)$.
By (\labelcref{rem3.3i}) we have that all the coordinates $x_i$ and $y_i$
are in $\Ksc(S)$. Thus in particular the cross ratio
$(v_1,v_2;v_3,v_4)$ is in $\Ksc(S)$. If there is no pair $(w_1,w_2)$ 
of linearly
independent holonomy vectors, then $\Kcr(S) = \QQ$ and the 
inclusion $\Kcr(S) \subseteq \Ksc(S)$ trivially holds.\\
Since the Veech group preserves the set of holonomy vectors, we furthermore 
have that if there are at least two linearly independent holonomy vectors, 
then $\Ktr(S) \subseteq \Khol(S)$. However, if all holonomy vectors are parallel, 
it is not in general 
true that $\Ktr(S) \subseteq \Khol(S)$.
An example of a surface $S$ showing this is given in \cite[Lemma 3.7]{PSV}:
The surface $S$
is obtained from glueing two copies of $\RR^2$ along horizontal
slits $l_n$ of the plane with end points $(4n+1,0)$ and $(4n+3,0)$.
In particular all saddle connections are horizontal and
the fields $\Khol(S)$, $\Kcr(S)$ and $\Ksc(S)$ are all $\Q$.
But the Veech group is very big. It consists of
all matrices in $\mathbf{GL}_{+}(2,\R)$ which fix the first
standard basis vector $e_1$; compare  \cite[Lemma 3.7]{PSV}.

\begin{remark}\label{uncountable-vg-examples}
The translation surface $S$ from \cite[Lemma 3.7]{PSV} has the following properties:
\[\Gamma(S) = \left\{ \bpm 1&t\\0&s\epm| t \in \R, s \in \R_{+}\right\}\]
and $\Khol(S) = \Kcr(S) = \Ksc(S) = \Q$.
In particular, we have   $\Ktr(S) = \RR$.
\end{remark}
 
Finally, in \Cref{crsc} we see
that for a large class of translation surfaces we have that
$\Kcr(S) = \Ksc(S)$. The main argument of the proof was given in \cite{GJ}
for precompact surfaces.

\begin{proposition}\label{crsc}
Let $S$ be a (possibly non precompact) tame 
translation surface, $\compl{S}$ its metric completion and 
$\Sigma \subset \compl{S}$ its set of singularities. 
Suppose that $\compl{S}$ has a geodesic
triangulation by countably many triangles $\Delta_k$ 
($k \in I$ for some index set $I$) such that
the set of vertices equals $\Sigma$.
We then have {\em $\Kcr(S) = \Ksc(S)$}.
\end{proposition}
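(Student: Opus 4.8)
The plan is to prove the two inclusions $\Kcr(S) \subseteq \Ksc(S)$ and $\Ksc(S) \subseteq \Kcr(S)$ separately, following the strategy of \cite{GJ} and exploiting that both fields are invariants of $S$ under the action of $\mathrm{GL}_2(\R)$ on translation surfaces. Indeed, the cross ratio is a $\mathrm{PGL}_2(\R)$-invariant, and the field $\Ksc(S)$ is unchanged when the subgroup $\Lambda_0 = h(H_1(\compl{S}, \Sigma;\Z)) \subset \R^2$ is replaced by $A\Lambda_0$ for $A \in \mathrm{GL}_2(\R)$, as one reads off directly from the characterisation in Remark~\ref{remark3.3}(i) (a vector has coordinates in $k$ with respect to $\{e_1,e_2\}$ if and only if its image has coordinates in $k$ with respect to $\{Ae_1,Ae_2\}$). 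Since the hypothesis that $\Kcr(S)$ is defined forces at least four non-parallel holonomy vectors, we work throughout in the case $n=2$.

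For $\Kcr(S) \subseteq \Ksc(S)$ I would fix a basis $\{e_1,e_2\} \subset \Lambda_0$ as in Remark~\ref{remark3.3}(i). Given any four holonomy vectors $v_1,\dots,v_4$, their coordinates with respect to $\{e_1,e_2\}$ lie in $\Ksc(S)$ by definition of the segment field. Computing the cross ratio in this coordinate system, which is legitimate by projective invariance, expresses it via (\ref{CR}) as a rational function of the slopes $r_i$, each a quotient of $\Ksc(S)$-coordinates and hence in $\Ksc(S)$. Thus every cross ratio lies in $\Ksc(S)$, giving the inclusion.

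The substantial direction is $\Ksc(S) \subseteq \Kcr(S)$, and this is where the triangulation enters. First I would normalise: fix one triangle $\Delta_0$, whose three edges develop to vectors summing to zero; the two linearly independent among them determine a unique $A \in \mathrm{GL}_2(\R)$ sending the three edge vectors to $(1,0)$, $(0,1)$, $(-1,-1)$ respectively. After replacing $S$ by $A\cdot S$ — which changes neither $\Kcr(S)$ nor $\Ksc(S)$ — the edges of $\Delta_0$ lie in $\Q^2 \subseteq \Kcr(S)^2$ with slopes in $\Q$. Evaluating $(v_1,v_2;v_3,v_4)$ with $v_1,v_2,v_3$ the edges of $\Delta_0$ then expresses the slope of an arbitrary holonomy vector $v_4$ as a Möbius function with rational coefficients of a cross ratio, by (\ref{CR}); hence the slope of every holonomy vector lies in $\Kcr(S)$. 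The key propagation step is local: a geodesic triangle develops to a Euclidean triangle whose three directed edge vectors sum to zero, so if one edge is already known to lie in $\Kcr(S)^2$ and the other two have slopes in $\Kcr(S)$, then writing those two as scalar multiples of direction vectors in $\Kcr(S)^2$ turns the relation "edges sum to zero" into a $2\times 2$ linear system over $\Kcr(S)$ with a unique solution, forcing all three edge holonomies into $\Kcr(S)^2$.

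Starting from $\Delta_0$, I would propagate this conclusion across the whole triangulation: adjacent triangles share a full edge, and the dual graph of a triangulation of the connected space $\compl{S}$ is connected, so every triangulation edge acquires holonomy in $\Kcr(S)^2$. Finally, the edges of the triangulation generate the relative homology $H_1(\compl{S},\Sigma;\Z)$, whence $\Lambda_0 \subseteq \Kcr(S)^2$ and therefore $\Ksc(S) \subseteq \Kcr(S)$ by the characterisation of the segment field. The main obstacle is organising the propagation over a \emph{possibly infinite} triangulation: because each step is purely local to a single triangle and uses only already-established data on a shared edge, connectivity of the dual graph replaces the finite induction available in the precompact case of \cite{GJ}, and no global finiteness is required.
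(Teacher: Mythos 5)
Your proof is correct and takes essentially the same route as the paper: the inclusion $\Kcr(S) \subseteq \Ksc(S)$ is handled identically (normalise two vectors, observe the slopes lie in $\Ksc(S)$), and your normalise-then-propagate argument across the triangulation is precisely the content of Proposition 5.2 of \cite{GJ}, which the paper simply cites for the hard inclusion after noting that the triangulation hypothesis is what makes that argument go through. The only difference is that you write out in full the Gutkin--Judge propagation (base triangle, slopes via cross ratios, the $2\times 2$ linear system over $\Kcr(S)$, connectivity of the dual graph) that the paper outsources.
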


\begin{proof}
The inclusion ''$\subseteq$'' was shown in 
general in the paragraph below \Cref{remark3.3}.
The inclusion ''$\supseteq$''
follows from  \cite[Proposition 5.2]{GJ}. The statement 
there is for precompact surfaces, but the proof works in the
same way if there exists a 
triangulation as required in this proposition.
More precisely, in \cite{GJ} it is shown that 
the $\Kcr(S)$-vector space $V(S)$ spanned 
by the image of $H_1(\compl{S},\Sigma; \ZZ)$ under 
the holonomy map is \mbox{2-dimensional}
over $\Kcr(S)$. Hence $\Ksc(S) \subseteq \Kcr(S)$.
\end{proof} \qed 

It follows from \Cref{rel} that in general
no further inclusions between
the four fields from \Cref{definition-fields} hold than those stated above; 
see \Cref{corrolary-relations-between-fields} for a subsumption of the 
relations between the fields.\\

\indent If $S$ is a precompact translation surface of genus $g$, then $[\Ktr(S):\Q]\leq g$. 
Moreover, the traces of elements in $\Gamma(S)$ are algebraic integers (see \cite{Mc}). 
When dealing with tame translation surfaces, such algebraic properties do not hold in general.
\begin{proposition}
  \label{alg}
For each $n\in\N\cup\{\infty\}$ there exists a tame translation surface $S_n$ of infinite 
genus such that the transcendence degree of the field extension $\emKtr(S_n)/\Q$ 
is $n$. $S_n$ can be chosen to be a Loch Ness monster. 
\end{proposition}
\begin{proof}
Let $\{\lambda_1, \ldots, \lambda_n\}$ be $\Q$-algebraically independent real numbers 
with $|\lambda_i| > 2$.
Define
$$
G_n:=\left<\left(
\begin{array}{cc}
\mu & 0 \\
0 & \mu^{-1}
\end{array}
\right)
\hspace{1mm} | \hspace{2mm} \mu+\mu^{-1} = \lambda_i \mbox{ with } i \in \{1,\ldots, n\}\right >.
$$
$G_n$ is countable and a subgroup of the diagonal group. 
In particular, $G_n$ is disjoint from the set $\mathcal{U}$
of contraction matrices; compare 
\Cref{P:PSV} for the definition of $\mathcal{U}$. 
Thus, we can apply \Cref{P:PSV}
and obtain a surface $S_n$ with Veech group $G_n$.
We have
\[\Q \subset \Q(\lambda_1, \ldots, \lambda_n) \subseteq \Ktr(S_n) \subseteq L = \Q(\mu | 
\mu + \mu^{-1} = \lambda_i \mbox{ with } i \in \{1, \ldots, n\}).\]
Since the generators $\mu$ of $L$ are algebraic over $\Q(\lambda_1, \ldots, \lambda_n)$,
it follows that $L/\Q(\lambda_1, \ldots, \lambda_n)$ and thus also $\Ktr(S_n)/\Q(\lambda_1, \ldots, \lambda_n)$ is algebraic
and we obtain the claim.
\qed 
\end{proof}

\indent If $\mu_1$ is one of the two solutions of $ \mu+\mu^{-1}=\pi$ and  
$$
  G:=\left<\left(
  \begin{array}{cc}
    \mu_1 & 0 \\
    0 & \mu_1^{-1}
  \end{array}
  \right)
  \hspace{1mm}\right>,
$$
then we obtain in the same way the following corollary.
\begin{corollary}
 There are examples of tame translation surfaces $S$ of infinite genus 
with a cyclic hyperbolic Veech group such that $\emKtr(S)$ is not a number field. 
Again the translation surface can be chosen
as a Loch Ness monster.
\end{corollary}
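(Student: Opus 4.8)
The plan is to obtain the corollary as a direct specialization of Proposition~\ref{alg} to the simplest case in which the trace is forced to be transcendental. First I would apply Proposition~\ref{alg} with $g=1$ and $\lambda_1 = \pi$. This produces a tame translation surface $S$ of infinite genus and one end whose Veech group is $\Gamma(S) = G_1 = \langle M \rangle$, where $M = \bpm \mu_1 & 0 \\ 0 & \mu_1^{-1} \epm$ and $\mu_1$ is the real root of $x^2 - \pi x + 1$, so that $\mu_1 + \mu_1^{-1} = \pi$. Since $\Gamma(S)$ is generated by a single element it is cyclic, and since $|\mathrm{tr}(M)| = \pi > 2$ the eigenvalues $\mu_1, \mu_1^{-1}$ are real and distinct, so the image of $M$ in $\mathrm{PSL}(2,\R)$ is hyperbolic in the sense fixed in the paragraph preceding Theorem~\ref{ncon}. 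This already yields the required cyclic hyperbolic Veech group.

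Next I would identify the trace field. Every element of $\Gamma(S)$ is a power $M^k$ with $\mathrm{tr}(M^k) = \mu_1^k + \mu_1^{-k}$, and the standard recursion $\mu_1^{k+1} + \mu_1^{-(k+1)} = (\mu_1 + \mu_1^{-1})(\mu_1^k + \mu_1^{-k}) - (\mu_1^{k-1} + \mu_1^{-(k-1)})$ shows by induction that each such trace is an integer polynomial in $\pi = \mu_1 + \mu_1^{-1}$. Hence all traces lie in $\Q[\pi]$, and since $\pi = \mathrm{tr}(M)$ itself occurs, the field generated by the traces is exactly $\Ktr(S) = \Q(\pi)$. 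I would emphasize that the eigenvalue $\mu_1 = \tfrac12\bigl(\pi + \sqrt{\pi^2 - 4}\bigr)$ does \emph{not} enter, precisely because $\Ktr(S)$ is by definition generated by traces and not by matrix entries.

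Finally I would invoke the transcendence of $\pi$ over $\Q$ (Lindemann's theorem): since $\pi$ is not algebraic, $\Q(\pi)$ is a purely transcendental extension of $\Q$, in particular of infinite degree, and therefore fails to be a number field. Combining the three steps gives a tame translation surface of infinite genus and one end with cyclic hyperbolic Veech group and $\Ktr(S)$ not a number field, which is the assertion of the corollary.

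I do not expect a genuine obstacle, as the statement is an immediate instance of Proposition~\ref{alg}; the only two points requiring a line of care are (i) checking that the trace field collapses to $\Q(\pi)$ rather than picking up the irrationality $\sqrt{\pi^2 - 4}$ coming from the eigenvalues, which is forced by the definition of $\Ktr$, and (ii) citing the transcendence of $\pi$ to conclude that $\Q(\pi)$ is not a number field. Any other transcendental real greater than $2$ in absolute value would work equally well in place of $\pi$.
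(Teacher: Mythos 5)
Your proposal is correct and follows exactly the paper's route: the paper derives the corollary in one line by applying Proposition~\ref{alg} with $g=1$ and $\lambda_1=\pi$, which is precisely your specialization. The extra details you supply (the trace recursion showing $\Ktr(S)=\Q(\pi)$ and the appeal to the transcendence of $\pi$) are sound elaborations of what the paper leaves implicit.
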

\indent Transcendental numbers naturally appear also in fields associated with Veech groups arising from 
a  generic triangular billiard. Indeed, let $\mathcal{T}\subset\R^2$ denote the space 
of triangles parametrised by two angles $(\theta_1,\theta_2)$. 
Remark that $\mathcal{T}$ is a simplex.  
For every $T = T_{(\theta_1,\theta_2)}\in\mathcal{T}$, a classical construction 
due to Katok and Zemljakov  produces 
a tame flat surface $S_T$ from $T$ \cite{KZ}. If $T$ has an interior angle which is not 
commensurable with $\pi$, $S_T$ is a Loch Ness monster; compare \cite{V1}. 
\begin{proposition}
The set $\mathcal{T}'\subset \mathcal{T}$ formed by those triangles such that $\emKsc(S_T)$, $\emKcr(S_T)$ 
and $\emKtr(S_T)$ are not number fields, is of total (Lebesgue) measure in $\mathcal{T}$.
\end{proposition}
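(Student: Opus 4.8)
The plan is to reduce everything to the trace field by exploiting the chain of inclusions $\Ktr(S_T)\subseteq\Khol(S_T)\subseteq\Ksc(S_T)=\Kcr(S_T)$ recorded above, and then to exhibit a single transcendental number inside $\Ktr(S_T)$ for almost every $T$. Since a subfield of a number field is again a number field, as soon as $\Ktr(S_T)$ fails to be a number field so do the two larger fields $\Ksc(S_T)$ and $\Kcr(S_T)$; hence it suffices to prove that $\Ktr(S_T)$ is not a number field for a.e.\ $T\in\mathcal{T}$. The equality $\Ksc(S_T)=\Kcr(S_T)$ comes from Proposition~\ref{crsc}, whose hypothesis holds because the Katok--Zemljakov surface $S_T$ is tiled by congruent copies of $T$, and this tiling is a geodesic triangulation of $\compl{S_T}$ whose vertex set is exactly the set $\Sigma$ of cone points.

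First I would recall the structure of the unfolding \cite{KZ}. Write the interior angles of $T$ as $\theta_1,\theta_2,\theta_3$ with $\theta_1+\theta_2+\theta_3=\pi$, and let $G\subset O(2)$ be the group generated by the linear parts of the reflections in the three sides of $T$. The subgroup $G\cap SO(2)$ is generated by the rotations $R_{2\theta_1}$ and $R_{2\theta_2}$ through the angles $2\theta_1$ and $2\theta_2$, the rotation through $2\theta_3$ being the inverse of their product since $\sum 2\theta_i=2\pi$. The surface $S_T$ consists of one copy $gT$ of $T$ for each $g\in G$, glued along edges, and left multiplication by an element $h\in G\cap SO(2)$ permutes these copies compatibly with the gluings. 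This defines an orientation-preserving affine automorphism of $S_T$ whose derivative is the rotation matrix $h$. When $\theta_1$ is not a rational multiple of $\pi$ the group $G$ is infinite, $S_T$ is the infinite genus, one-ended surface described above, and these automorphisms are non-trivial.

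Consequently $R_{2\theta_1}\in\Gamma(S_T)$, so $\mathrm{tr}(R_{2\theta_1})=2\cos(2\theta_1)\in\Ktr(S_T)$, i.e.\ $\Q(\cos 2\theta_1)\subseteq\Ktr(S_T)$. What remains is a measure-theoretic statement. For each algebraic number $a$ the equation $\cos 2\theta=a$ has only finitely many solutions $\theta$ in a bounded interval, and the algebraic numbers are countable; hence $A=\{\theta_1 : \cos 2\theta_1 \text{ is algebraic}\}$ is a countable, and therefore Lebesgue-null, subset of the $\theta_1$-axis. The set $(A\times\R)\cap\mathcal{T}$ is a null subset of the simplex $\mathcal{T}$, and for every $T$ in its full-measure complement $\cos 2\theta_1$ is transcendental, so $\Ktr(S_T)\supseteq\Q(\cos 2\theta_1)$ is not a number field; note that such a $\theta_1$ is automatically incommensurable with $\pi$, so $S_T$ is indeed of the required infinite type. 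By the first paragraph $\Ksc(S_T)$ and $\Kcr(S_T)$ are then not number fields either, so this full-measure set is contained in $\mathcal{T}'$, as claimed.

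The step I expect to be the main obstacle is the second paragraph: verifying rigorously that the rotations in $G\cap SO(2)$ descend to genuine non-trivial affine automorphisms of the \emph{infinite} unfolding $S_T$ with the asserted rotational derivatives, and that the tiling by copies of $T$ really is an admissible triangulation for Proposition~\ref{crsc} (vertices equal to $\Sigma$, all pieces geodesic). Both facts are standard for finite rational billiards, but for the Loch Ness monster one must check that passing to infinitely many copies creates no pathology at the possibly infinite-angle cone points and does not spoil the compatibility of the $G$-action with the edge gluings. Everything else is either bookkeeping with the inclusion chain or the elementary null-set argument of the third paragraph.
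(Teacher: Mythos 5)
Your proposal is correct and its core is the same as the paper's: both arguments rest on the facts that a rotation through (a multiple of) the angle $\theta_1$ lies in $\Gamma(S_T)$ by the Katok--Zemljakov unfolding, so that $2\cos$ of that angle lies in $\Ktr(S_T)$, and that the set of $\theta_1$ for which this cosine is algebraic is countable, hence null. (Whether the rotation is by $\theta_1$, as the paper asserts citing \cite{V}, or by $2\theta_1$, as you argue from the reflection group, is immaterial, since one cosine is algebraic iff the other is.)

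The one genuine difference is in how $\Ksc$ and $\Kcr$ are handled. You funnel everything through the trace field via the chain $\Ktr\subseteq\Khol\subseteq\Ksc=\Kcr$, so a single transcendental trace settles all three fields at once; the paper instead exhibits an explicit element $\tfrac{2\cos\theta_1}{\rho}$ of $\Ksc(S_T)$ by expanding the holonomy vector $e^{2i\theta_1}$ in the basis $\{1,\rho e^{i\theta_1}\}$, and treats $\Ktr$ separately. Your route is slightly cleaner in that the null-set argument involves only the single variable $\theta_1$, whereas the paper's element of $\Ksc$ involves the side-length ratio $\rho$ and so implicitly needs a two-variable genericity argument; the paper's route has the advantage of not invoking the inclusion $\Ktr\subseteq\Khol$ (which requires two independent holonomy vectors, trivially satisfied here). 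Both arguments need Proposition~\ref{crsc} to reach $\Kcr$, and you are right to flag the hypothesis that the vertex set of the triangulation equals $\Sigma$ as the point to check: a vertex whose angle is a rational multiple of $\pi$ can unfold to a regular point, so one should first discard the null set of triangles having such an angle, after which every vertex becomes an infinite-angle cone point and the hypothesis holds.
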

\begin{proof}
Since $S_T$ has a triangulation with countably many triangles satisfying the hypotheses of 
\Cref{crsc}, the fields $\Ksc(S_T)$ and $\Kcr(S_T)$ coincide. 
Without loss of generality we can assume that the triangle $T = T_{(\theta_1,\theta_2)}$
has the vertices $0$, $1$ and $\rho e^{i\theta_1}$ (with $\rho > 0$) in the complex plane $\C$.
When doing the Katok-Zemljakov construction we start by reflecting $T$ at its
edges. Thus in particular $S_T$ contains the geodesic quadrangle shown in \Cref{figure-quadrangle}.
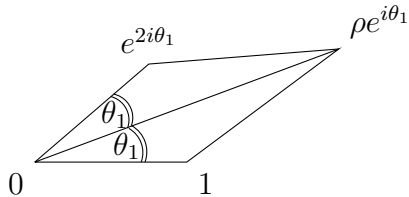
\begin{figure}[h]
  \begin{center}
    \setlength{\unitlength}{3cm}
    \begin{tikzpicture}
      \draw (0,0) node[anchor = north east]{$0$} -- (2,0) node[anchor = north west]{$1$} 
               -- (4,1.5) node[anchor = south west]{$\rho e^{i\theta_1}$} -- (0,0);
      \draw (0,0) -- (1.5, 1.3) node[anchor = south]{$e^{2i\theta_1}$} -- (4,1.5);
      \draw (1.4,0) arc (-20:60:.37)  node at (10:1.23) {$\theta_1$} node at (31:1.23) {$\theta_1$};      
      \draw (1.45,0) arc (-20:60:.4); 
      \draw (1.22,.45) arc (-20:75:.32);
      \draw (1.27,.48) arc (-20:78:.32);
      \end{tikzpicture}
    \caption{Geodesic quadrangle on the surface $S_{T}$ with $T$ the triangle $T_{(\theta_1,\theta_2)}$}
    \label{figure-quadrangle}
  \end{center}
\end{figure}


Thus the vectors $v_1 = (1,0)$, $v_2 = (\rho\cos\theta_1, \rho\sin\theta_1)$ 
and $v_3 = (\cos 2\theta_1, \sin 2\theta_1)$ 
are holonomy vectors.
Choose $\{v_1,v_2\}$ as basis of $\R^2$. We then have 
$v_3 = a\cdot v_1 + b\cdot v_2$
with
$$
a=-1 \mbox{ and } b=\frac{2\cos\theta_1}{\rho}.
$$
\indent Therefore $\frac{2\cos\theta_1}{\rho}$ is an element of $\Ksc(S) = \Kcr(S)$. 
Furthermore, from \cite{V}
we know that the matrix representing the rotation by $\theta_1$
is in $\Gamma(S_T)$. Hence $2\cos\theta_1$ is in $\Ktr(S_T)$. 
Thus if we choose the values $\frac{\cos\theta_1}{\rho}$, respectively $\cos\theta_1$,
to be non algebraic numbers, then $\Ksc(S) = \Kcr(S)$, respectively $\Ktr(S_T)$, are not number fields. \qed
\end{proof}


\section{Proof of main results}
\label{proofs}
In this section we prove the results stated in the introduction.\\

\emph{Proof \Cref{mainthm}}. We begin proving part (\labelcref{Thm1i}).  
Let $T_0 = T\setminus \{\infty\}$ be the once punctured torus with
$T = \RR^2/L$, where $L$ is a lattice in $\RR^2$,
and with $\infty \in T$ the image of the origin removed.
Let  $p:S\longrightarrow T_0$ be an unramified translation covering.
The existence of such a covering is equivalent to $S$ being affine equivalent to an origami. We use the notation
from  \Cref{prelim}. In particular $\pi_S:\widetilde{S}\longrightarrow S$
is a universal cover, $\overline{\widetilde{S}}$ and $\overline{S}$
are the metric completions of $\widetilde{S}$ and $S$, respectively, and
$\widetilde{\Sigma}(S)$ is the set of developed cone points.
We then have that the following diagram commutes, since $p$ and $\pi_S$ are translation maps:
\begin{equation}
  \xymatrix{
    \widetilde{S}\hspace{1mm} \ar[d]^{\pi_S}\ar@{^(->}[r]&\overline{\widetilde{S}}\ar[d]^{\overline{\pi_S}}\ar[dr]^{\overline{dev}}&\\
    S\hspace{1mm}\ar@{^(->}[r]\ar[d]^{p}&\ar[d]^{\overline{p}}\overline{S} & \R^2\ar[dl]^{\pi_T}\\
    T_0\hspace{1mm}\ar@{^(->}[r]& T & \hspace*{10mm}.
  }
\end{equation}
Given that $T\setminus T_0=\infty=\overline{p}\circ\overline{\pi_S}(\overline{\widetilde{S}}\setminus{\widetilde{S}})$, 
the projection of $\widetilde{\Sigma}(S)$ to $T$ is just a point. This proves sufficiency.\\
\indent \Cref{ehol} implies that if $\widetilde{\Sigma}(S)$ is contained in $L+x$ 
then every $\rm hol(\gamma)$ is a translation of the plane of the form $z\to z+\lambda_{\gamma}$, where $\lambda_{\gamma}\in L$. 
Puncture $\widetilde{S}$ and $S$ at ${\rm dev}^{-1}(L+x)$ and $\pi_S({\rm dev}^{-1}(L+x))$ respectively to obtain 
$\widetilde{S}_0$ and $S_0$ and denote $\R^2_0=\R^2\setminus (L+x)$. Let $\pi_{S\mid}: \widetilde{S}_0 \longrightarrow S_0$ and 
$\pi_{T\mid}: \R^2_0 \longrightarrow T_0$ be the restrictions of the universal covers
$\pi_S$ and $\pi_T$.
 Given that $\widetilde{S}_0$ 
has the translation structure induced by pull-back of $\pi_{S\mid}$, 
the map ${\rm dev}_|:\widetilde{S}_0\longrightarrow \R^2_0$ is a flat surjective 
map; compare \cite[\S 3.4]{Thu}. \Cref{hol} implies that
\begin{equation}
  \label{cover}
  \xymatrix{
    \widetilde{S}_0 \ar[r]^{\rm dev_|} \ar[d]^{\pi_{S\mid}} & \R^2_0 \ar[d]^{\pi_{T\mid}}\\
    S_0 & T_0}
\end{equation}
descends to a flat covering map $p:S_0\longrightarrow T_0$. 
Hence $\overline{S}=\overline{S_0}$ defines a covering over a 
flat torus ramified at most over one point. This proves necessity.\\
\\
Now we prove part (\labelcref{partB}). First we prove that every origami satisfies conditions 
(\labelcref{vg}), (\labelcref{Kcr}), (\labelcref{Khol}), (\labelcref{Ksc}) and (\labelcref{Ktr}). \\
Let $\overline{p}:\overline{S} \to T$ be an origami ramified at most over $\infty \in T$.
All saddle connections of $S$ are preimages of closed simple curves 
on $T$ with a base point at $\infty$. This implies that all holonomy vectors have integer coordinates.
Thus $\Ksc(S) = \Khol(S) = \Kcr(S)  = \QQ$. 
Hence every origami fulfils conditions (\labelcref{Kcr}), (\labelcref{Khol}) and 
(\labelcref{Ksc}) in part (\labelcref{partB}). 
If $S$ furthermore has at least two linearly independent holonomy vectors, 
then the Veech group must preserve the lattice spanned by them.
Thus it is commensurable to a (possible infinite index)
subgroup of SL$(2,\ZZ)$ and $S$ fulfils in addition (\labelcref{vg}) and
(\labelcref{Ktr}).\\ 

We finally prove that none of the conditions in theorem (\labelcref{vg}) to (\labelcref{Ktr}) 
imply that $S$ is an origami. \Cref{E:1} shows that neither (\labelcref{vg}) nor (\labelcref{Ktr}) 
imply that $S$ is an origami. \Cref{E:2} shows that neither (\labelcref{Kcr}), nor (\labelcref{Khol}),
nor (\labelcref{Ksc}) imply that  $S$ is an origami.

\begin{example}
  \label{E:1}
In this example we construct a tame translation surface $S$ whose Veech group $\Gamma(S)$ is ${\rm SL}(2,\Z)$, hence  $\Ktr(S)=\Q$, but which is not an origami. We achieve this making a slight modification of the construction presented in \Cref{subs:PSV}. Let $G={\rm SL}(2,\Z)$. Apply the construction  in \Cref{subs:PSV}
to $G$ but choose the family of marks
$\mathcal{C}^{-1}$ in such a way that the there exists $N\in\Z$
and irrational $\alpha>0$ so that $(\alpha,N)$ is a holonomy vector. This is possible since in 
the cited construction the choice of the point $(x_1,y_1)$ is free. Observe that $v_1=(-1,1)$, $v_2=(0,1)$, 
$v_3=(1,0)$ and $v_4=(\alpha,N)$ are holonomy vectors of $\overline{S}$. Let 
$l_i$ be lines in $\PP^1(\R)$ containing $v_i$, $i=1,\ldots,4$ respectively. 
A direct calculation shows that the cross ratio of these four lines is $\frac{\alpha}{\alpha+N}$, which lies in $\Kcr(S)$. Hence $\Kcr(S)$ is not isomorphic to $\Q$ and therefore $S$ cannot be an origami.
\end{example}
\begin{example}
  \label{E:2}
In this example we construct a surface $S$ whose Veech group is not a discrete subgroup of ${\rm SL}(2,\R)$ 
(hence $S$ cannot be an origami, since in addition $S$ has two non parallel saddle connections) 
but such that 
\begin{equation}
  \label{E:allQ}
\Kcr(S) = \Khol(S) = \Ksc(S) = \Ktr(S) = \QQ.
\end{equation}
Consider $G={\rm SL}(2,\Q)$ or $G={\rm SO}(2,\Q)$. 
These are non-discrete countable subgroups of ${\rm SL}(2,\R)$ with no contracting elements. Hence we can apply the construction from \Cref{P:PSV} to $G$ but choosing the points $(x_i,y_i)$ that define the families of marks 
$\mathcal{C}^i$ in $\Q\times\Q$ for all $i\geq 1$ indexing a countable set of generators of $G$. 
The result is a tame translation surface $S$ whose Veech group is isomorphic to $G$ and whose holonomy vectors $S$ have all coordinates in $\Q\times\Q$. This implies \labelcref{E:allQ}. 
\end{example} \qed
\emph{Proof \Cref{rel}}. 
Let us first show that (\labelcref{Thm2i}) holds. The tame translation surface $S$ in \Cref{E:1} is such that $\Gamma(S)={\rm SL(2,\Z)}$ and $\Kcr(S)$ is not isomorphic to $\Q$. Since in general $\Kcr(S)$ is a subfield of $\Ksc(S)$ this surface also satisfies that $\Gamma(S)={\rm SL(2,\Z)}$ and $\Ksc(S)$ is not isomorphic to $\Q$.
To finish the proof of (\labelcref{Thm2i}) we consider the following example.
\begin{example}
  \label{E:3}
In this example we construct a tame translation surface such that $\Gamma(S)={\rm SL(2,\Z)}$ and $\Khol(S)$ is not isomorphic to $\Q$. Apply the construction described in \Cref{subs:PSV} to $G={\rm SL}(2,\Z)$ but consider the following modification. Let $\{e_1,e_2\}$ be the standard basis of $\R^2$. There exists a natural number $n>0$ such that the mark $M$ in $A_{Id}$ 
whose end points are $-ne_1$ and $-(n-1)e_1$ does not intersect all other marks used in the construction. 
On a $[0,\pi]\times [0,e]$ rectangle $R$, where $e$ is Euler's number, identify 
opposite sides to obtain a flat torus $T$. Consider on $T$ a horizontal mark $M'$ of length 1 and glue $A_{Id}$ with $T$ along $M$ and $M'$. Then proceed with the construction in a ${\rm SL}(2,\Z)$-equivariant way. This produces a tame translation surface $S$ whose Veech group is ${\rm SL}(2,\Z)$. The image of $H_1(\overline{S},\Z)$ under the holonomy map contains the vectors $e_1$, $e\cdot e_1$ and $\pi\cdot e_2$. Hence $\Khol(S)$ is not isomorphic to $\Q$.
\end{example}
Part (\labelcref{Thm2ii}) follows from \Cref{E:2}.
\indent We now prove (\labelcref{Thm2Part3}). First we construct $S$ such that $\Kcr(S)=\Q$ but $\Ksc(S)$ is not. Consider the following example.
\begin{example}
  \label{E:4}
Let  $P_1$, $P_2$ and $P_3$ be three copies of $\R^2$; choose on each copy an origin, 
and let $\{e_1,e_2\}$ be the standard basis. Consider the following: 
\begin{enumerate}
 \item Marks $v_n$ on the plane $P_1$ along segments whose end points are $n\cdot e_2$ and $n\cdot e_2+e_1$ with $n=0,1$.
\item Marks on $P_2$ and $P_3$ along the segments $w_0$, $w_1$ whose end points are $(0,0)$ and $(1,0)$,
and then along the segments $z_0$ and $z_1$ whose end points are $(2,0)$ and $(2+\sqrt{p},0)$, for some prime $p$.
\end{enumerate}
Glue the three planes along slits as follows: $v_i$ to $w_{i}$, for $i=0,1$ and $z_0$ to $z_1$. The result is a surface $S$ for which $\{0,1,-1,\infty\}$ parametrises all possible slopes of lines through the origin in $\R^2$ containing holonomy vectors of saddle connections. Hence $\Kcr(S)=\Q$. On the other hand, the set of holonomy vectors contains $(1,0)$, $(0,1)$, $(1,1)$ and $(\sqrt{p},0)$. Therefore $\Ksc(S)$ contains $\Q(\sqrt{p})$ as a subfield. \\    
\end{example}
\indent We finish the proof of (\labelcref{Thm2Part3}) by constructing a precompact tame translation surface such that $\Khol(S)=\Q$, but $\Ksc(S)$ is not. Consider the following example.
\begin{example}
  \label{E:5}
Consider two copies $L_1$ and $L_2$ of the L-shaped origami 
tiled by three unit squares; see \emph{e.g.} \cite[Example on p. 293]{HL2}. Consider a point $p_i\in L_i$ 
at distance $0<\varepsilon<<1$ from the $6\pi$-angle singularity $s_i$, $i=1,2$. 
Let $m_i$ be a marking of length $\varepsilon$ on $L_i$ defined by a geodesic of length $\varepsilon$ 
joining $p_i$ to $s_i$, $i=1,2$. We can choose $p_i$ so that 
both markings are parallel
and the vector defined by them has irrational coordinates. Glue then $L_1$ and $L_2$ along $m_1$ and $m_2$ to obtain $S$. 
By construction  $h(H_1(\overline{S},\Z))=\Z\times\Z$, hence $\Khol(S)=\Q$, 
but $h(H_1(\overline{S},\Sigma;\Z))$ contains an orthonormal base $\{e_1,e_2\}$ 
and a vector $h(m_1)$ with irrational coordinates.
This implies that $\Ksc(S)$ is not isomorphic to $\Q$.\\  
\end{example}

\indent We address (\labelcref{Thm2iv}) now. Observe that the surface $S$ constructed 
in \Cref{E:5} satisfies that $\Khol(S)=\Q$ but $\Kcr(S)$ is not equal to $\Q$. 
Indeed, we have saddle connections of slope $0$, $1$ and $\infty$.
Since the slope of $h(m_1)$ is irrational, we are done. 
We now construct $S$ such that $\Kcr(S)=\Q$, but $\Khol(S)$ is not. 
We will furthermore have that $S$ has four pairwise nonparallel
holonomy vectors thus $\Kcr(S)$ is not trivially $\Q$.


\begin{example}
  \label{E:6}
  Take two copies of the real plane $P_1$ and $P_2$. Choose an origin and let ${e_1,e_2}$ be the standard basis. 
Let $\mu_i>1$, $i=1,2,3$ be three distinct irrational numbers and define $\lambda_0=0$ and $\lambda_n=\sum_{i=1}^n\mu_i$ for $n=1,2,3$. 
On $P_1$ consider the markings $m_n$ whose end points are $n e_2$ and $n e_2+ e_1$ for $n=0,\ldots,3$. On $P_2$ consider the markings $m_n'$ whose end points are $(n+\lambda_n)e_1$ and $(n+\lambda_n+1)e_1$ for $n=0,\ldots,3$. 
Glue $P_1$ and $P_2$ along the markings $m_n$ and $m_n'$. The result is a tame flat surface $S$ with eight $4\pi$-angle singularities. These singularities lie on $P_2$ on a horizontal line, and hence we can naturally order them from, say, left to right. Let us denote these ordered singularities by $a_j$, $j=1,\ldots,8$. Let $g_{e_1}(a_i,a_j)$ (respectively $g_{e_2}(a_i,a_j)$) be the directed geodesic in $S$ parallel to $e_1$ (respectively $e_2$) joining $a_i$ with $a_j$. Define in $H_1(\overline{S},\Z)$
\begin{itemize}
\item the cycle $c_1$ as $g_{e_1}(a_3,a_4)g_{e_1}(a_4,a_5)g_{e_2}(a_5,a_3)$,
\item the cycle $c_2$ as $g_{e_1}(a_4,a_3)g_{e_1}(a_3,a_2)g_{e_2}(a_2,a_4)$,
\item the cycle $c_3$ as $g_{e_2}(a_6,a_8)g_{e_1}(a_8,a_7)g_{e_1}(a_7,a_6)$.
\end{itemize}
Where the product is defined to be the composition of geodesics on $S$. 
Note that $h(c_1)=(1+\mu_2,-1)$, $h(c_2)=(-(1+\mu_1),1)$ and $h(c_3)=(-(1+\mu_3),1)$. 
We can choose parameters $\mu_i$, $i=1,2,3$ so that the $\Z$-module generated by these 3 vectors has rank 3. Therefore $\Khol(S)$ cannot be isomorphic to $\Q$.\\
\end{example}

\indent We address now (\labelcref{Thm2v}). We construct first a flat surface $S$ for which $\Ktr(S)=\Q$ but none of the conditions (\labelcref{vg}), (\labelcref{Kcr}), (\labelcref{Khol}) 
or (\labelcref{Ksc}) in \Cref{mainthm} hold. We achieve this by making a slight modification on the construction of the surface in \Cref{E:3} in the following way. First, change ${\rm SL}(2,\Z)$ for ${\rm SL}(2,\Q)$. Second, let the added mark M be of unit length and such that the vector defined by developing it along the flat structure neither  lies 
in the lattice $\pi\Z\times e\Z$ nor has rational slope. The result of this modification is a tame translation surface $S$ homeomorphic to the Loch Ness monster for which $\Gamma(S)={\rm SL}(2,\Q)$ and for which both $\Kcr(S)$ and $\Khol(S)$ (hence $\Ksc(S)$ as well) have transcendence degree at least 1 over $\Q$.\\
Finally, an example of a surface $S$  which satisfies (\labelcref{Khol}), (\labelcref{Ksc}) 
and (\labelcref{Kcr}),  but with $\Ktr(S)\neq\Q$, is given  in \cite[Lemma 3.7]{PSV}; 
see \Cref{uncountable-vg-examples}. We underline that all holonomy vectors in this surface $S$ are parallel and hence $\Kcr(S)$ is by definition isomorphic to $\Q$. For the sake of completeness we construct a tame translation surface $S$ where not all holonomy vectors are parallel, such $\Kcr(S)=\Q$, but where $\Ktr(S)$ is not equal to $\Q$. Let $E_0$ be a copy of the affine plane $\R^2$ with a chosen origin and $(x,y)$-coordinates. Slit $E_0$ along the rays $R_v:=(0,y\geq 1)$ and $R_h:=(x\geq 1,0)$ to obtain $\hat{E}_0$. 
Choose an irrational $0<\lambda<1$  and $n\in\N$ so that $1<n\lambda$. Define
 $$
 M:=\left( \begin{array}{cc}
\lambda & 0  \\
0 & n\lambda  
\end{array} \right) \hspace{1cm} R^k_v:=M^kR_v\hspace{1cm} R^k_h:=M^kR_h\hspace{5mm}k\in\Z .
$$
Here $M^k$ acts linearly on $E_0$. For $k\neq 0$, slit a copy of $E_0$ along the rays $R_v^k$ and $R_h^k$ to obtain $\hat{E}_k$. 
We glue the family of slitted planes $\{\hat{E}_k\}_{k\in\Z}$ to obtain the desired tame flat surface 
as follows. Each $\hat{E}_k$ has a ``vertical boundary" formed by two vertical rays issuing 
from the point of coordinates $(0,(n\lambda)^k)$. Denote by $R_{v,l}^k$ and $R_{v,r}^k$ the 
boundary ray to the left and right respectively. Identify by a translation the 
rays $R_{v,r}^k$ with $R_{v,l}^{k+1}$, for each $k\in\Z$. Denote by $R_{h,b}^k$ and $R_{h,t}^k$ the horizontal boundary rays in $\hat{E}_k$ to the bottom and top respectively. Identify by a translation $R_{h,b}^k$ with $R_{h,t}^{k+1}$ for each $k\in\Z$.\\
\indent By construction, $\{(-\lambda^k,(n\lambda)^k)\}_{k\in\Z}$ is the set of all holonomy vectors of $S$. Clearly, 
all slopes involved are rational; hence $\Kcr(S)=\Q$. On the other hand, $M\in\Gamma(S)$ and $tr(M)=(n+1)\lambda$. 
Note that the surface $S$ constructed in this last paragraph admits no triangulation satisfying the hypotheses of \Cref{crsc}.\qed

\begin{corollary}\label{corrolary-relations-between-fields}
  Between the four fields $\emKtr(S)$, $\emKhol(S)$, $\emKcr(S)$ and $\emKsc(S)$
  the following relations hold:
  \begin{enumerate}
  \item\label{cor:fields:i}
    $\emKhol(S) \subseteq \emKsc(S)$ and $\emKcr(S) \subseteq \emKsc(S)$.
  \item\label{cor:fields:ii}
    For each other pair $(i,j)$, with $i,j \in \{\rm{tr, hol, cr, sc}\}$,
    $(i,j) \neq (\rm{hol},\rm{sc})$ and $(i,j) \neq (\rm{cr},\rm{sc})$,
    we can find surfaces $S$ such that $K_i(S) \not\subseteq K_j(S)$.
    In these examples we can always choose $K_j(S)$ to be $\Q$.
  \item\label{cor:fields:iii}
    If $S$ has two non parallel holonomy vectors, then $\emKtr(S) \subseteq \emKhol(S)$. 
  \item\label{cor:fields:iv}
    If $\overline{S}$ has a geodesic triangulation by countably many triangles  
    whose vertices form the set $\Sigma$ of singularities of $S$, then
    $\emKcr(S) = \emKsc(S)$.
  \end{enumerate}
\end{corollary}

\begin{proof}
(\labelcref{cor:fields:i}) is shown in \Cref{section3} before \Cref{uncountable-vg-examples};
(\labelcref{cor:fields:ii}) is shown in \Cref{rel} and in \Cref{uncountable-vg-examples};
(\labelcref{cor:fields:iii}) is shown before  \Cref{uncountable-vg-examples} and
(\labelcref{cor:fields:iv}) is the result of \Cref{crsc}.\qed
\end{proof}
{\em Proof \Cref{OUTF2}:}

Let $\Gamma$ be a subgroup of $\sltwo(\ZZ)$. By \Cref{P:PSV} we know
that there is a translation surface $S$ with Veech group $\Gamma$. Furthermore
in the construction all slits can be chosen such that their end points
are integer points in the corresponding plane $\CC = \RR^2$; thus
$S$ is an origami by \Cref{mainthm}, part (\labelcref{Thm1i}). Hence it allows 
for a subset $S^*$ of $S$, whose complement is a discrete set of points, 
an unramified covering $p:S^* \to T_0$ to the once puncture unit torus $T_0$.
Recall that $p$ defines the conjugacy class $[U]$ of a subgroup $U$ of $F_2$ as follows.
Let $U$ be the fundamental group of $S^*$. 
It is
embedded into $F_2 = \pi_1(T_0)$
via the homomorphism $p_*$ between fundamental groups induced by $p$.
The embedding depends on the choices of the base points up to conjugation. 
In \cite{S} this is used to give the description
of the Veech group  
completely in terms of $[U]$; compare Theorem~\labelcref{thmc}. Recall for this that 
the outer automorphism group $\out(F_2)$ is isomorphic to $\gltwo(\ZZ)$.
Furthermore it naturally acts on the set of the conjugacy classes
of subgroups $U$ of $F_2$. 

\begin{theoremothers}[\cite{S}, Prop. 2.1]\label{thmc}
  The Veech group $\Gamma(S^*)$ equals the stabiliser of the conjugacy
  class $[U]$ in $\sltwo(\ZZ)$ under the action described above.
\end{theoremothers}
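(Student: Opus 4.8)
The plan is to identify $\mathrm{Aff}^+(S^*)$, the orientation-preserving affine automorphisms of $S^*$ fixing the fibre over $\infty$, with those affine automorphisms of the once-punctured torus $E^*$ that lift to $S^*$, and then to translate ``lifts to $S^*$'' into the stabilizer condition by the lifting criterion of covering space theory. Throughout I write $D\colon \mathrm{Aff}^+(S^*) \to \Gamma(S^*)$ and $D_T\colon \mathrm{Aff}^+(E^*) \to \sltwo(\ZZ)$ for the derivative maps. Here $D_T$ is an isomorphism (an orientation-preserving affine automorphism of $T=\RR^2/\ZZ^2$ fixing $0$ is $x\mapsto Ax$ with $A\in\sltwo(\ZZ)$), the restriction $p\colon S^*\to E^*$ is a translation covering so $D(p)=\mathrm{Id}$, and each $\phi\in\mathrm{Aff}^+(E^*)$ induces a well-defined $\phi_*\in\out^+(F_2)$; the assignment $\phi\mapsto\phi_*$ realizes the isomorphism $\sltwo(\ZZ)\cong\out^+(F_2)$ compatibly with $D_T$ (the derivative $A$ is exactly the action of $\phi_*$ on $H_1(E^*)=\ZZ^2$) and with the action on conjugacy classes of subgroups.

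For the inclusion $\Gamma(S^*)\subseteq\mathrm{Stab}([U])$, I would take $\psi\in\mathrm{Aff}^+(S^*)$ with derivative $A$. By Theorem~\ref{mainthm}(B)(1) (an origami has the two non-parallel holonomy vectors $(1,0),(0,1)$) we have $A\in\sltwo(\ZZ)$, so $\phi:=D_T^{-1}(A)$ is defined. The two affine maps $\phi\circ p$ and $p\circ\psi$ from $S^*$ to $E^*$ both have derivative $A$; since affine maps on a connected surface with equal derivative differ by a translation of the target, $p\circ\psi=\tau\circ\phi\circ p$ for a translation $\tau$ of $T$. Because $\psi$ extends to $\overline{S}$ and permutes the fibre over $\infty$, the left-hand side carries punctures to $\infty$, so $\tau\circ\phi$ fixes $\infty$; as $\phi$ already fixes $\infty$, the translation $\tau$ fixes $0\in T$ and is trivial. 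Hence $p\circ\psi=\phi\circ p$, i.e.\ $\psi$ is a homeomorphism lift of $\phi$, and the lifting criterion then forces $\phi_*(U)$ to be conjugate to $U$ in $F_2$, that is $\phi_*\in\mathrm{Stab}([U])$, equivalently $A\in\mathrm{Stab}([U])$.

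For the reverse inclusion $\mathrm{Stab}([U])\subseteq\Gamma(S^*)$, I would take $A\in\sltwo(\ZZ)$ whose associated $\phi_*\in\out^+(F_2)$ stabilizes $[U]$ and set $\phi=D_T^{-1}(A)$. Now the lifting criterion runs the other way: since $\phi_*(U)$ is conjugate to $U$, the homeomorphism $\phi$ of $E^*$ lifts to a homeomorphism $\psi\colon S^*\to S^*$ with $p\circ\psi=\phi\circ p$. As the translation structure of $S^*$ is pulled back from $T$ through $p$ and $\phi$ is affine, the lift $\psi$ is automatically affine with $D(\psi)=A$, whence $A\in\Gamma(S^*)$. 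The two inclusions together give $\Gamma(S^*)=\mathrm{Stab}([U])$.

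The main obstacle I expect is the bookkeeping in the first inclusion: one must show that an affine automorphism of $S^*$ is genuinely a lift of an affine automorphism of the torus fixing the puncture, not merely a cover of one up to a translation. The decisive inputs are that origami Veech groups land in $\sltwo(\ZZ)$ (so the torus map $\phi$ exists), that two affine maps with the same derivative differ by a target translation, and that preservation of the fibre over $\infty$ annihilates that translation. A secondary point needing care is the compatibility of the geometric derivative action with the combinatorial action of $\out^+(F_2)\cong\sltwo(\ZZ)$ on conjugacy classes, which is precisely what makes ``$A$ stabilizes $[U]$'' and ``$\phi_*$ stabilizes $[U]$'' the same assertion.
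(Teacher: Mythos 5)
Your proposal is correct, but it cannot be ``the same as the paper's proof'' for a simple reason: the paper does not prove Theorem~\ref{thmc} at all. It quotes it from \cite{S}, Prop.~2.1, and its entire added content is the remark that the proof given there for finite origamis works verbatim for infinite ones. Your argument is therefore a genuinely different, self-contained route. The proof in \cite{S} (the one the paper leans on) passes through the universal cover of $E^*$: affine automorphisms are lifted all the way up, the group of lifts is identified with $\mathrm{Aut}^+(F_2)$ acting by conjugation on the deck group $F_2$, and the Veech group appears as the image in $\out^+(F_2)\cong\sltwo(\ZZ)$ of the stabilizer of $U$ in $\mathrm{Aut}^+(F_2)$; that extra structure is what powers the algorithm of \cite{S}. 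You instead work directly with the covering $p\colon S^*\to E^*$: descend $\psi$ to the torus via ``equal derivative $\Rightarrow$ differ by a target translation,'' kill the translation using the fibre over $\infty$, and convert ``$\psi$ is a lift of $\phi$'' into the stabilizer condition by the lifting criterion. What your route buys is precisely the point the paper asserts without argument: neither the lifting criterion nor uniqueness of lifts cares whether $[F_2:U]$ is finite, so the validity for infinite origamis is transparent.

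Two steps deserve repair or explicitation, and both are exactly where the infinite setting matters. First, to get $A\in\sltwo(\ZZ)$ you should not cite Theorem~\ref{mainthm}(B)(1): that statement concerns $\Gamma(S)$, and its hypothesis (two non-parallel saddle connections on $S$) can fail, e.g.\ for an unramified origami covering, where $S$ has no singularities at all. The correct argument is the one in your parenthesis, applied to $S^*$ rather than $S$: every point of $p^{-1}(\infty)$ is a puncture of $S^*$, so the edges of the squares are saddle connections of $S^*$, all holonomy vectors of $S^*$ lie in $\ZZ^2$ and include $(1,0)$ and $(0,1)$; hence $A$ and $A^{-1}$ are integral and $\det A=1$. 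Second, in the reverse inclusion, the lift $\psi$ of $\phi$ is a homeomorphism only because $\phi_*(U)$ is \emph{exactly} a conjugate of $U$: one lifts $\phi$ and $\phi^{-1}$ with matched base points (so that the image subgroup of the lift equals, not merely sits inside, the subgroup attached to the target base point) and then uniqueness of lifts shows the two lifts are mutually inverse. This is worth a sentence here, since an infinite-index subgroup of $F_2$ can be conjugate to a proper subgroup of itself, so ``contained in a conjugate of $U$'' alone would not give invertibility of the lift. Neither point breaks your approach; both are local repairs.
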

The theorem in \cite{S} considers only finite origamis, but the proof
works in the same way for infinite origamis. Recall furthermore that
$\Gamma(S^*) = \Gamma(S) \cap \sltwo(\ZZ)$ and 
$\Gamma(S) \subseteq \sltwo(\ZZ)$ if and only if the $\ZZ$-module 
spanned by the holonomy vectors of the saddle connections equals $\ZZ^2$.
We can easily choose the marks in the construction in \cite{PSV}
such that this condition is fulfilled.\\ \qed

{\em Proof \Cref{ncon}:}
First notice that the translation surfaces constructed in the proof of \Cref{rel} parts (\labelcref{Thm2i}) 
and (\labelcref{Thm2v}) are both counterexamples for 
statements (\labelcref{Thm3Part1}) and (\labelcref{Thm3Part2}). 
Furthermore, \Cref{alg} shows that two hyperbolic elements in $ \Gamma(S)$ do not have 
to generate the same trace field.
To disprove (\labelcref{Thm3Part3}) 
we let $\mu$ be a solution to the equation $\mu+\mu^{-1}=\sqrt[3]{11}$
and $G$ is the group generated by the matrices
\begin{equation}
  \label{ntr}
  \left( \begin{array}{cc}
    1 & 1  \\
    0 & 1  
  \end{array} \right), \hspace{5mm}
  \left( \begin{array}{cc}
    1 & 0  \\
    1 & 1  
  \end{array} \right)\hspace{5mm}
  \text{and}\hspace{5mm}
  \left( \begin{array}{cc}
    \mu & 0  \\
    0 & \mu^{-1}  
  \end{array} \right) ,
\end{equation}
then  \Cref{P:PSV} produces a tame translation surface $S$
with Veech group $G$ for which $\Ktr(S) = \QQ(\sqrt[3]{11})$ 
is not totally real and thus is a counterexample for (\labelcref{Thm3Part3}).\\
\indent Finally for disproving (\labelcref{Thm3Part4}) we construct a 
tame translation surface $S$ with a hyperbolic element in its Veech group for which $\Lambda$ 
has infinite index in $\Lambda_0$. The construction has two steps.\\
\emph{Step 1}: Let $M$ be the matrix given by
\begin{equation}
  \label{m2}
  \left( \begin{array}{cc}
    2 & 0  \\
    0 & \frac{1}{2}  
  \end{array} \right).
\end{equation}

Let $S'$ be the tame translation surface obtained from \Cref{P:PSV}
for the group $G'$ generated by $M$. Let $\Lambda'$ be the image in $\R^2$ under the holonomy map of $H_1(\overline{S'},\Z)$, $\{e_1,e_2\}$ be the standard basis of $\R^2$ and $\beta:= G' \cdot\{e_1,e_2\}$. 
We suppose without loss of generality that $e_1$ and $e_{2}$
lie in $\Lambda'$. \\
\emph{Step 2}: Let $\alpha=\{v_j\}_{j\in\N}\subset\R^2\setminus \Lambda'$ be a sequence of $\Q$-linearly independent vectors. 
We modify the construction in \Cref{P:PSV} (applied to $G'$) in the following way. 
We add to the page $A_{Id}$ a family of marks parallel to vectors in $\alpha$. 
We can suppose that the new marks lie in the left-half plane $Re(z)<0$ in $A_{Id}$ and
are disjoint by pairs and do not intersect any of the marks in $\mathcal{C}_1$ used in the
construction from Step 1. For each $j\in\N$ there exists a natural number $k_j$ such that $2k_j>|v_j|$.  
Let $T_j$ be the torus obtained from a $2k_j\times 2k_j$ square by identifying opposite sides. 
Slit each $T_j$ along a vector parallel to $v_j$ and glue it to $A_{Id}$ along the mark parallel to $v_j$. 
Denote by $A_{Id}'$ the result of performing this operation for every $j\in\N$, 
then proceed just the same construction as in \Cref{P:PSV}. Let $S$ be the resulting translation surface.
Observe that glueing in the tori $T_j$ produces new elements in $H_{1}(\overline{S},\Z)$ whose image under the holonomy map lie in $\Z\times\Z$. Thus the subgroups of $\R^{2}$ generated by the image under the holonomy map of $H_1(\overline{S}',\Z)$
and $H_1(\overline{S},\Z)$ are the same.
\indent Let $\Lambda$ be the image in $\R^2$ under the holonomy map of $H_1(S,\Z)$. By construction, the index of $\Lambda$ in $\Lambda_{0}$ is at least the cardinality of $\alpha$, which is infinite.\qed

\begin{bibdiv}
\begin{biblist}

\bib{BV}{article}{
  AUTHOR = {Bowman, Joshua P. and Valdez, Ferr{\'a}n},
     TITLE = {Wild singularities of flat surfaces},
   JOURNAL = {Israel J. Math.},
  FJOURNAL = {Israel Journal of Mathematics},
    VOLUME = {197},
      YEAR = {2013},
    NUMBER = {1},
     PAGES = {69--97},
}

\bib{C}{article}{
  AUTHOR  = {Chamanara, R.},
  TITLE   = {Affine automorphism groups of surfaces of infinite type},
  JOURNAL = {Contemporary Mathematics},
  VOLUME  = {355},
  YEAR    = {2004},
  PAGES   = {123--145},
}

\bib{CG}{article}{
  author  = {Conze, J.-P.} 
  author  = {Gutkin, E.},
  title   = {On recurrence and ergodicity for geodesic flows on non-compact periodic polygonal surfaces},
  journal = {Ergodic Theory and Dynamical Systems},
  volume  = {32},
  number  = {2},
  year    = {2012},
  pages   = {491--515},
}

\bib{G}{article}{
  author  = {Gutkin, E.},
  title   = {Geometry, topology and dynamics of geodesic flows on noncompact polygonal surfaces},
  journal = {Regular and Chaotic Dynamics},
  volume  = {15},
  number  = {4-5},
  pages   = {482--503},
  year    = {2010},
}

\bib{GJ}{article}{
AUTHOR = {Gutkin, E.},
AUTHOR = {Judge, C.},
     TITLE = {Affine mappings of translation surfaces: geometry and
              arithmetic},
   JOURNAL = {Duke Math. J.},
  FJOURNAL = {Duke Mathematical Journal},
    VOLUME = {103},
      YEAR = {2000},
    NUMBER = {2},
     PAGES = {191--213},
     }

\bib{HW}{article}{
AUTHOR={Hooper, P.},
AUTHOR={Hubert, P.},
AUTHOR={Weiss, B.},
TITLE={Dynamics on the infinite staircase surface},
EPRINT={http://www.math.bgu.ac.il/~barakw/papers/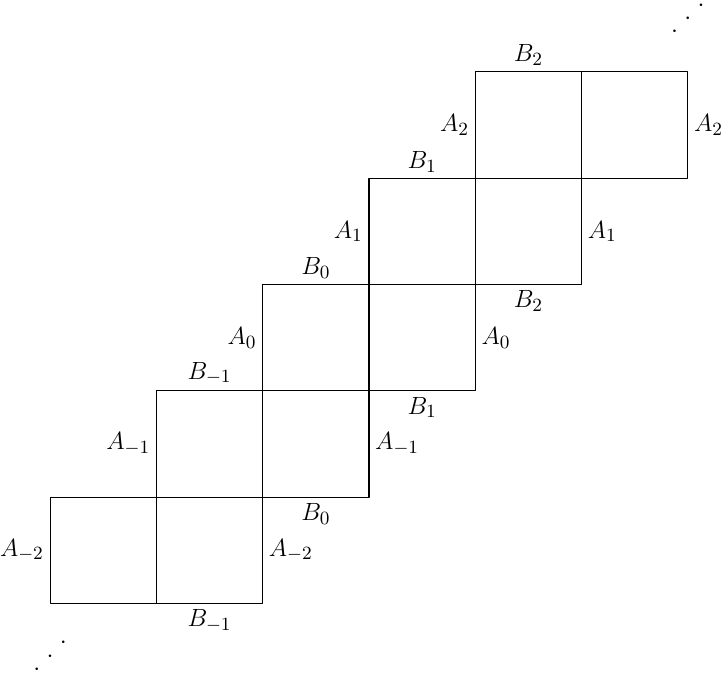},
  JOURNAL  = {to appear in Dis. Cont. Dyn. Sys.}
YEAR={2010},
}

\bib{HW1}{article}{
  AUTHOR = {Hooper, P.},
  AUTHOR = {Weiss, B.},
  TITLE = {Generalized staircases: recurrence and symmetry},  
  JOURNAL = {Ann. Inst. Fourier},
  FJOURNAL = {Annales de L'Institut Fourier},
  VOLUME  = {62}
  YEAR = {2012}
  NUMBER = {4},
  PAGES = {1581--1600},
}

\bib{HL}{article}{
  AUTHOR = {Hubert, P.}
  AUTHOR = {Lanneau, E.},
  TITLE = {Veech groups without parabolic elements},
  JOURNAL = {Duke Math. J.},
  FJOURNAL = {Duke Mathematical Journal},
  VOLUME = {133},
  YEAR = {2006},
  NUMBER = {2},
  PAGES = {335--346},
}

\bib{HL2}{article}{
  author   = {Hubert, P.},
  AUTHOR   = {Leli\`evre, S.}
  title    = {Prime arithmetic Teichm\"uller discs in ${\cal H}(2)$},
  journal  = {Isr. J. Math.},
  FJOURNAL = {Israel Journal of Mathematics},
  volume   = {151},
  pages    = {281--321},
  year     = {2006},
}

\bib{HLT}{article}{
  AUTHOR   = {Hubert, P.},
  AUTHOR   = {Leli\`evre, S.},
  AUTHOR   = {Troubetzkoy, S.},
  TITLE    = {The Ehrenfest wind-tree model: periodic directions, recurrence, diffusion},
  JOURNAL  = {to appear in J. Reine Angew. Math.}
  FJOURNAL = {Journal fuer die reine und angewandte Mathematik (Crelle's Journal)}
  EPRINT   = {http://www.cmi.univ-mrs.fr/~hubert/articles/windtree.pdf},
} 

\bib{HS}{article}{
  AUTHOR   = {Hubert, P.},
  AUTHOR   = {Schmith\"{u}sen, G.},
  TITLE    = {Infinite translation surfaces with infinitely generated Veech groups},
  JOURNAL  =  {J. Mod. Dyn.}
  FJOURNAL = {Journal of Modern Dynamics}
  VOLUME   = {4}
  YEAR     = {2010}
  NUMBER   = {4}
  PAGES    = {715--732}
}

\bib{KS}{article}{
AUTHOR = {Kenyon, R.},
AUTHOR = {Smillie, J.},
     TITLE = {Billiards on rational-angled triangles},
   JOURNAL = {Comment. Math. Helv.},
  JOURNAL = {Commentarii Mathematici Helvetici},
    VOLUME = {75},
      YEAR = {2000},
    NUMBER = {1},
     PAGES = {65--108},
}

\bib{KZ}{article}{
    AUTHOR = {Zemljakov, A.},
  AUTHOR = {Katok, A.}     
  TITLE = {Topological transitivity of billiards in polygons},
   JOURNAL = {Mat. Zametki},
  FJOURNAL = {Akademiya Nauk SSSR. Matematicheskie Zametki},
    VOLUME = {18},
      YEAR = {1975},
    NUMBER = {2},
     PAGES = {291--300},
}

\bib{Masur}{misc}{
   AUTHOR = {Masur, Howard},
     TITLE = {Ergodic theory of translation surfaces},
 BOOKTITLE = {Handbook of dynamical systems. {V}ol. 1{B}},
     PAGES = {527--547},
 PUBLISHER = {Elsevier B. V., Amsterdam},
      YEAR = {2006},
}

\bib{Mc}{article}{
  AUTHOR   = {McMullen, C.},
  TITLE    = {Billiards and {T}eichm\"uller curves on {H}ilbert modular
    surfaces},
  JOURNAL  = {J. Amer. Math. Soc.},
  FJOURNAL = {Journal of the American Mathematical Society},
  VOLUME   = {16},
  YEAR     = {2003},
  NUMBER   = {4},
     PAGES = {857--885}
}

\bib{Mc1}{article}{
AUTHOR = {McMullen, C.},
     TITLE = {Teichm\"uller geodesics of infinite complexity},
   JOURNAL = {Acta Math.},
  FJOURNAL = {Acta Mathematica},
    VOLUME = {191},
      YEAR = {2003},
    NUMBER = {2},
     PAGES = {191--223},
}

\bib{M}{article}{
  AUTHOR =  {M\"{o}ller, M.},
  TITLE   = {Affine groups of flat surfaces},
  JOURNAL = {Papadopoulos, Athanase (ed.), Handbook of Teichm\"{u}ller theory Volume II, 
            Chapter 10 (EMS), IRMA Lectures in Mathematics and Theoretical Physics 13},
  PAGES   = {369--387},
  year    = {2009},
}

\bib{PSV}{article}{
AUTHOR = {Przytycki, P.},
AUTHOR = {Schmith\"{u}sen, G.},
AUTHOR = {Valdez, F.},
TITLE = {Veech groups of Loch Ness Monsters},
JOURNAL = { Ann. Inst. Fourier (Grenoble)},
VOLUME = {61}
NUMBER = {2}
DATE = {2011}
PAGES = {673--687}
}

\bib{R}{article}{
AUTHOR = {Richards, Ian},
     TITLE = {On the classification of noncompact surfaces},
   JOURNAL = {Trans. Amer. Math. Soc.},
  FJOURNAL = {Transactions of the American Mathematical Society},
    VOLUME = {106},
      YEAR = {1963},
     PAGES = {259--269}
}

\bib{S}{article}{
AUTHOR = {Schmith\"usen, G.}
TITLE = {An algorithm for finding the Veech group of an origami}
JOURNAL = {Experimental Mathematics}
VOLUME = {13}
NUMBER = {4}
DATE = {2004}
PAGES = {459--472}
}

\bib{Snonc}{article}{
  AUTHOR = {Schmith\"usen, G.},
  TITLE = {Origamis with non congruence Veech groups.},
  JOURNAL = {Proceedings of Symposium on Transformation Groups, Yokohama},
  DATE = {2006},
}

\bib{Thu}{book}{
AUTHOR = {Thurston, W.},
     TITLE = {Three-dimensional geometry and topology. {V}ol. 1},
    SERIES = {Princeton Mathematical Series},
    VOLUME = {35},
      NOTE = {Edited by Silvio Levy},
 PUBLISHER = {Princeton University Press},
   ADDRESS = {Princeton, NJ},
      YEAR = {1997}
}

\bib{V1}{article}{
  AUTHOR = {Valdez, F.},
  TITLE = {Infinite genus surfaces and irrational polygonal billiards},
  JOURNAL = {Geom. Dedicata},
  FJOURNAL = {Geometriae Dedicata},
  VOLUME = {143},
  YEAR = {2009},
  PAGES = {143--154}
}

\bib{V}{article}{
  AUTHOR = {Valdez, F.}
  TITLE = {Veech groups, irrational billiards and stable abelian differentials.}
  EPRINT = {http://arxiv.org/abs/0905.1591v2}
  DATE = {2009}
}

\bib{Ve}{article}{
  AUTHOR = {Veech, W.A.},
  TITLE  = {Teichm\"uller curves in moduli space, Eisenstein series and an
    application to triangular billiards.},
  JOURNAL = {Invent. Math.},
  VOLUME  = {97},
  NUMBER  = {3},
  PAGES   = {553--583},
  YEAR    = {1989}
}






\end{biblist}
\end{bibdiv}

\end{document}